\documentclass[12pt]{amsart}
\usepackage{amsmath, amsthm, amscd, amsfonts, amssymb, graphicx, color,float,pgf,tikz, mathrsfs}
\usepackage{amssymb,fontenc}
\usepackage{latexsym,wasysym,mathrsfs}
\usepackage{hyperref}
\usepackage{booktabs} 
\usepackage{array}
\usepackage[utf8]{inputenc}
\usepackage[T1]{fontenc}
\usepackage{booktabs}
\usepackage{geometry}
\usepackage{subcaption}
\usepackage{soul}

\newtheorem{theorem}{Theorem}[section]
\newtheorem{lemma}[theorem]{Lemma}

\newtheorem{corollary}[theorem]{Corollary}

\theoremstyle{definition}
\newtheorem{definition}[theorem]{Definition}

\newtheorem{remark}[theorem]{Remark}
\newcommand{\F}{{\mathcal F}}

\newcommand{\MF}{{\mathcal MF}}
\newcommand{\MB}{{\mathcal MB}}
\newcommand{\TB}{{\mathcal TB}}
\newcommand{\GB}{{\mathcal GB}}

\def\stirling2#1#2{\genfrac{\{}{\}}{0pt}{}{#1}{#2}}

\title{Prime and Touchard Congruences of Mixed-Type Bell Numbers}
\author{Daniel Yaqubi$^{*}$}
\address{\bf $^*$Department of Computer science, University of Torbat-e Jam, Torbat-e Jam, Iran.}
\email{yaqubi@tjamcaas.ac.ir, or daniel\_yaqubi@yahoo.es}
\author{Madjid Mirzavaziri$^{**}$}
\address{\bf Department of Pure Mathematics, Faculty of Mathematical Sciences,  Ferdowsi University of Mashhad, P. O. Box 1159-91775, Mashhad, Iran.}
\email{mirzavaziri@gmail.com}

\subjclass[2020]{Primary: 11B73; Secondary: 05A18.}
\keywords{Stirling numbers of the second kind; Touchard congruences; Fubini numbers; partition of a multi-set; Mixed type Bell numbers.}
\begin{document}

\begin{abstract}
This paper establishes a comprehensive combinatorial and arithmetic framework for mixed Stirling and mixed Bell numbers, bridging partition structures, Touchard polynomials, and prime-power congruences. Furthermore, we develop to $p$-adic  valuation theory, proving prime-power Touchard congruences and higher-order modulo-$p^2$ refinements that generalize classical arithmetic properties of combinatorial sequences.
\end{abstract}
\maketitle
\section{Introduction}
The classical problem of partitioning an $n$-element set into $k$ non-empty blocks, enumerated by the Stirling numbers of the second kind ${n \brace k}$, naturally generalizes to multiset partitions. Let $\mathbf{b}$ and $\mathbf{c}$ denote the multiplicity vectors of balls and cells, respectively. The \emph{mixed (multiset) Stirling numbers of the second kind}, $\mathcal{S}(\mathbf{b};\mathbf{c})$, count the admissible partitions of the multiset $\mathbf{b}$ into cells $\mathbf{c}$ \cite{Yaqubi2016, YaqubiPreprint}. Summing over all admissible non-zero cell-multiplicity vectors defines the \emph{mixed Bell number}, $\mathcal{B}(\mathbf{b}) = \sum_{\mathbf{c}} \mathcal{S}(\mathbf{b};\mathbf{c})$, which recovers the classical Bell numbers $B_n$ when $\mathbf{b} = \mathbf{1}^n$.

Exploiting the exponential generating function (EGF) for a fixed cell-multiplicity vector $\mathbf{c} = (c_1, \ldots, c_k)$, given by $\sum_{n \geq 0} \mathcal{S}(n;\mathbf{c}) \frac{x^n}{n!} = \prod_{i=1}^{k} \frac{(e^x-1)^{c_i}}{c_i!}$ \cite{Yaqubi2019}, we introduced three novel Bell-type sequences arising from specialized mixed occupancy structures \cite{YaqubiPreprint}.
\begin{definition}
For a fixed integer $k \geq 1$, the \emph{general mixed Bell number of order $k$}, $\mathcal{GB}_n^{(k)}$, is the sum of mixed Stirling numbers over all non-negative vectors $\mathbf{c} = (c_1, \ldots, c_k)$. Its EGF is given by
\begin{equation}
    \sum_{n \geq 0} \mathcal{GB}_n^{(k)} \frac{x^n}{n!} = \exp\bigl(k(e^x-1)\bigr).
    \label{eq:general-mixed-bell-egf}
\end{equation}
\end{definition}

\begin{definition}
The \emph{mixed Bell number of order $k$}, $\mathcal{MB}_{n,k}=\sum_{m\geq1} \mathcal{S}\bigl(n;(m,1^k)\bigr)$, restricts the cells to one unlabeled cell of arbitrary multiplicity $m \geq 1$ and $k$ labeled cells of unit multiplicity. Summing over all $n \geq 1$ yields the EGF
\begin{equation}
    \sum_{n \geq 0} \mathcal{MB}_{n,k} \frac{x^n}{n!} = (e^x-1)^k \left(e^{e^x-1}-1\right).
    \label{eq:MB-order-egf}
\end{equation}
\end{definition}
\begin{definition}
The \emph{total mixed Bell number}, $\mathcal{TB}_n = \sum_{k \geq 1} \mathcal{MB}_{n,k}$, cumulatively sums the order-$k$ numbers over all valid $k$. Its EGF is
\begin{equation}
    \sum_{n \geq 0} \mathcal{TB}_n \frac{x^n}{n!} = \left(e^{e^x-1}-1\right) \frac{e^x-1}{2-e^x}.
    \label{eq:TMB-egf}
\end{equation}
\end{definition}
\section{Combinatorial Interpretations and Basic Properties}
\label{sec:combinatorial-connections}

We establish precise combinatorial interpretations for the three mixed Bell families, connecting them to classical sequences such as Touchard polynomials, Fubini numbers, and distinguished-block partitions. For the general mixed Bell numbers, restricting to $k$ labeled cell types with multiplicities $0$ or $1$ yields the exponential generating function (EGF). This coincides exactly with the EGF of the Touchard polynomials $T_n(y)=\sum_{j=0}^n{n\brace j}y^j$ evaluated at $y=k$. Thus
\begin{equation}
\GB^{(k)}_n=T_n(k)=\sum_{j=0}^n{n\brace j}k^j,
\end{equation}
which enumerates the classical $k$-colored set partitions of $[n]$. Specializing to $k=1$ recovers the ordinary Bell numbers $B_n$ (\href{https://oeis.org/A000110}{OEIS A000110}), while $k=2$ corresponds to \href{https://oeis.org/A001861}{OEIS A001861}.

For the order-$k$ mixed Bell numbers, the EGF factorization \ref{eq:MB-order-egf} shows that $MB_{n,k}$ counts set partitions of $[n]$ into exactly $k$ distinguished, labeled blocks and \emph{at least one} additional ordinary, unlabeled block.
reveals that $\mathcal{MB}_{n,k}$ counts set partitions of $[n]$ into exactly $k$ distinguished, labeled blocks and at least one ordinary, unlabeled block. Let $D(n,k)$ denote the number of set partitions of $[n]$ with $k$ distinguished blocks (\href{https://oeis.org/A049020}{OEIS A049020}). Subtracting the $k!{n\brace k}$ configurations lacking ordinary blocks gives the convolution
\begin{equation}
    \mathcal{MB}_{n,k} = k! \sum_{j=k}^{n-1} \binom{n}{j} {j\brace k} B_{n-j} = k!\left(D(n,k)-{n\brace k}\right).
\end{equation}
Also, the specialization $\MB_{n,1}=B_{n+1}-B_n-1=\mathcal{MB}_{n,1} = \text{\href{https://oeis.org/A005493}{OEIS A005493}}(n-1)-1.$
Summing over $k\ge1$ yields the total mixed Bell numbers $\TB_n$, whose EGF \ref{eq:TMB-egf} factors into the EGFs of nonempty preferential arrangements (Fubini numbers $\F_n$, \href{https://oeis.org/A000670}{OEIS A000670}) and nonempty ordinary set partitions. Combinatorially, $\mathcal{TB}_n$ enumerates the ways to place an ordered set partition on a nonempty proper subset $S \subset [n]$, and an ordinary partition on $[n]\setminus S$
\begin{equation}
\label{Dnk}
    \mathcal{TB}_n = \sum_{j=1}^{n-1} \binom{n}{j} \F_j B_{n-j} = \sum_{k=1}^{n-1} k!\left(D(n,k)-{n\brace k}\right).
\end{equation}
These sequences obey a unified system of recurrences, derivable directly from their defining EGFs.
\begin{theorem}
For $n \ge 0$, the mixed Bell families satisfy the following relations:
\begin{enumerate}
    \item $\displaystyle \mathcal{GB}_{n+1}^{(k)} = k \sum_{j=0}^{n} \binom{n}{j} \mathcal{GB}_j^{(k)} = k \left( \mathcal{GB}_n^{(k)} + \frac{d}{dk}\mathcal{GB}_n^{(k)} \right)$.
    \item $\displaystyle \MB_{n+1,k}=\sum_{j=0}^n\binom{n}{j}\Big(k\,\MB_{j,k-1}+\MB_{j,k}+k!{j\brace k}\Big).$
\item $\displaystyle \TB_{n+1}=\sum_{j=1}^{n}\binom{n+1}{j}\F_jB_{n+1-j}.$
\end{enumerate}
\end{theorem}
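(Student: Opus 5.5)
All three identities will be read off from the exponential generating functions \eqref{eq:general-mixed-bell-egf}, \eqref{eq:MB-order-egf} and \eqref{eq:TMB-egf}. The plan is to differentiate each EGF in $x$ (or, for the second half of (1), in $k$), rewrite the derivative as $e^x$ times a combination of known EGFs, and then compare coefficients of $x^n/n!$. Multiplying an EGF by $e^x=\sum_j x^j/j!$ produces exactly a binomial convolution $\sum_j\binom{n}{j}(\cdot)_j$.

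For (1), put $G(x)=\exp\bigl(k(e^x-1)\bigr)$. Then $G'(x)=k e^x G(x)$, and extracting the coefficient of $x^n/n!$ gives $\GB^{(k)}_{n+1}=k\sum_{j}\binom{n}{j}\GB^{(k)}_j$. For the second form, regard $G$ as a function of $k$ as well. We have $\partial G/\partial k=(e^x-1)G$, hence
\[
k\Bigl(G+\frac{\partial G}{\partial k}\Bigr)=k e^x G=\frac{\partial G}{\partial x}.
\]
Differentiation in $k$ commutes with coefficient extraction in $x$, so this yields $\GB^{(k)}_{n+1}=k\bigl(\GB^{(k)}_n+\tfrac{d}{dk}\GB^{(k)}_n\bigr)$. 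Equivalently, this is the classical Touchard recurrence $T_{n+1}(y)=y\bigl(T_n(y)+T_n'(y)\bigr)$ evaluated at $y=k$.

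For (2), write $E=e^{e^x-1}$ and $M_k(x)=(e^x-1)^k(E-1)$, with the convention $M_0=E-1$, i.e.\ $\MB_{j,0}=B_j-[j=0]$. The product rule together with $E'=e^xE$ gives
\[
M_k'=k e^x (e^x-1)^{k-1}(E-1)+e^x(e^x-1)^kE = e^x\Bigl(k M_{k-1}+M_k+(e^x-1)^k\Bigr),
\]
where the last term comes from splitting $E=(E-1)+1$. Since $(e^x-1)^k=\sum_j k!{j\brace k}x^j/j!$, comparing coefficients yields the stated convolution.

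For (3), the identity is the binomial convolution of the two factors of \eqref{eq:TMB-egf}, now read at index $n+1$. The factor $(e^x-1)/(2-e^x)$ has coefficients $\F_j$ for $j\ge1$ and zero constant term. The factor $E-1$ has coefficients $B_m$ for $m\ge1$ and zero constant term. Hence the $j=0$ and $j=n+1$ terms of $\sum_{j=0}^{n+1}\binom{n+1}{j}\F_jB_{n+1-j}$ vanish, leaving exactly the range $1\le j\le n$.

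The only real care needed is bookkeeping. One must fix the $k=1$ convention $\MB_{j,0}$, which arises from $M_0=E-1$. One must also check that the term $(e^x-1)^k$ produced when splitting $E=(E-1)+1$ matches $k!{j\brace k}$; it does, because the EGF of $k!{j\brace k}$ is exactly $(e^x-1)^k$.
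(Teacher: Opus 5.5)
Your proposal is correct and follows essentially the same route as the paper. You differentiate each EGF, rewrite the derivative as $e^x$ times known EGFs (using $\partial_k G=(e^x-1)G$ for the second form of (1)), extract coefficients, and obtain (3) as the binomial convolution of the two EGF factors, which is what the paper's appeal to \eqref{Dnk} amounts to. Your explicit convention $\mathcal{MB}_{j,0}=B_j-[j=0]$, forced by $M_0=e^{e^x-1}-1$, is a genuinely needed detail the paper leaves implicit: with the value $\mathcal{MB}_{0,0}=1$ shown in Table~\ref{tab:MB-values}, the $k=1$ case of (2) already fails at $n=0$.
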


\begin{proof}
 For (1), differentiating $\Phi_k(x) = \exp(k(e^x-1))$ with respect to $k$ provides the supplementary relation $\frac{\partial \Phi_k}{\partial k} = (e^x-1)\Phi_k(x)$, which translates algebraically to the stated derivative form. For (2), differentiating the EGF $\Phi_k(x)=(e^x-1)^k(e^{e^x-1}-1)$ directly gives
\[
\Phi_k'(x)=e^x\Big[k\,\Phi_{k-1}(x)+\Phi_k(x)+(e^x-1)^k\Big],
\]
and extracting coefficients (Cauchy product with $e^x$) gives exactly the corrected statement above. Part (3) follows from \ref{Dnk} by the index shift $n\mapsto n+1$.
\end{proof}
\subsection{Initial Values and Numerical Tables}
\label{subsec:initial-values}

We present initial values for the general mixed Bell polynomials, illustrating their rapid growth and connections to classical sequences.

\begin{table}[h]
\centering
\caption{General mixed Bell polynomials $\mathcal{GB}_n^{(k)}$}
\label{tab:GB_polynomials}
\renewcommand{\arraystretch}{1.3}
\begin{tabular}{c|l}
\hline
$n$ & $\mathcal{GB}_n^{(k)} = \sum_{j=0}^{n} {n \brace j} k^j$ \\ \hline
0 & $1$ \\
1 & $k$ \\
2 & $k^2 + k$ \\
3 & $k^3 + 3k^2 + k$ \\
4 & $k^4 + 6k^3 + 7k^2 + k$ \\
5 & $k^5 + 10k^4 + 25k^3 + 15k^2 + k$ \\
6 & $k^6 + 15k^5 + 65k^4 + 90k^3 + 31k^2 + k$ \\
7 & $k^7 + 21k^6 + 140k^5 + 350k^4 + 301k^3 + 63k^2 + k$ \\
8 & $k^8 + 28k^7 + 266k^6 + 1050k^5 + 1701k^4 + 966k^3 + 127k^2 + k$ \\ \hline
\end{tabular}
\end{table}
The polynomials $\mathcal{GB}_n^{(k)}$ coincide with the Touchard polynomials, with coefficients given by the Stirling numbers of the second kind (\href{https://oeis.org/A008277}{OEIS A008277}) ). Specializations, for
 $k=1$, is ordinary Bell numbers $B_n$ (\href{https://oeis.org/A000110}{OEIS A000110}): $1, 1, 2, 5, 15, 52, 203, 877, \dots$ and for $k=2$ is sequence $1, 2, 6, 22, 94, 454, \dots$ (\href{https://oeis.org/A000629}{OEIS A000629} ) for two-colored set partitions.

\begin{table}[h]
\centering
\caption{Corrected initial values of the mixed Bell numbers of order $k$ ($\MB_{n,k}$)}
\label{tab:MB-values}
\begin{tabular}{c|cccccc}
\toprule
$n\backslash k$ & 0 & 1 & 2 & 3 & 4 & 5\\
\midrule
0 & 1 & 0 & 0 & 0 & 0 & 0\\
1 & 1 & 0 & 0 & 0 & 0 & 0\\
2 & 2 & 2 & 0 & 0 & 0 & 0\\
3 & 5 & 9 & 6 & 0 & 0 & 0\\
4 & 15 & 36 & 48 & 24 & 0 & 0\\
5 & 52 & 150 & 290 & 300 & 120 & 0\\
6 & 203 & 673 & 1650 & 2580 & 2160 & 720\\
\bottomrule
\end{tabular}
\end{table}

The entry $\MB_{n,k}$ of table \ref{tab:MB-values} counts mixed partitions with $k$ labeled cells of multiplicity one and a remaining, nonempty, unlabeled block component. When $k=0$, the sequence reduces to the ordinary Bell numbers $B_n$: $1,1,2,5,15,52,203,\dots$ (note the $k=0$ column is $B_n$ itself, starting at $B_0=1$). For $k=1$, $\MB_{n,1}=B_{n+1}-B_n-1$ for $n\ge1$, beginning $0,2,9,36,150,673,\dots$ More generally,
\begin{equation}
\MB_{n,k}=k!\sum_{j=k}^{n-1}\binom{n}{j}{j\brace k}B_{n-j}.
\end{equation}

\begin{table}[h]
\centering
\caption{Initial values of the total mixed Bell numbers $\mathcal{TB}_n$}
\label{tab:TB-values}
\renewcommand{\arraystretch}{1.3}
\begin{tabular}{cc|cc}
\hline
$n$ & $\mathcal{TB}_n$ & $n$ & $\mathcal{TB}_n$ \\ \hline
$0$ & $0$ & $6$ & $7,783$ \\
$1$ & $0$ & $7$ & $80,150$ \\
$2$ & $2$ & $8$ & $932,746$ \\
$3$ & $15$ & $9$ & $12,152,013$ \\
$4$ & $108$ & $10$ & $175,550,014$ \\
$5$ & $860$ & & \\ \hline
\end{tabular}
\end{table}

The total mixed Bell numbers, defined by $\mathcal{TB}_n = \sum_{k \ge 1} \mathcal{MB}_{n,k}$, enumerate mixed partitions containing at least one labeled cell. The sequence begins as $0, 0, 2, 15, 108, 860, 7783, \dots$. 

A natural link to the Fubini numbers $\F_n$ (\href{https://oeis.org/A000670}{OEIS A000670}) arises through the weighted Stirling representation
\begin{equation}
    \mathcal{TB}_n = \sum_{r=2}^{n} {n \brace r} W_r,
    \label{eq:TB-stirling-form}
\end{equation}
where the weight is given by $W_r = \sum_{k=1}^{r-1} \binom{r}{k} k! = r! \sum_{j=1}^{r-1} \frac{1}{j!}$. While Fubini numbers order all $r$ blocks (weight $r!$), the total mixed construction distinguishes only a proper nonempty subset of blocks, providing a natural intermediate family between Bell- and Fubini-type enumerations.

The introduction of the three families of mixed partition sequences naturally motivates the definition of mixed Fubini numbers. These numbers may be viewed as a generalization of the classical Fubini numbers, which arise as a specific specialization within this broader framework.
\begin{definition}
\label{def:mixed-fubini}
For integers $n\ge0$ and $m\ge1$, the \emph{mixed Fubini number of type $(n,m)$} is
\begin{equation}
\MF_{n,m}=\sum_{k=0}^{n-m}\mathcal{S}\big(n;(m,1^k)\big),
\end{equation}
where $\mathcal{S}(n;(m,1^k))$ denotes the mixed Stirling number for one unlabeled component of $m$ blocks and $k$ additional, pairwise labeled, blocks of multiplicity one.
\end{definition}
Using $\mathcal{S}(n;(m,1^k))=\frac{(m+k)!}{m!}{n\brace m+k}$, the closed-form representations of mixed fubini numbers is
\begin{align}
\MF_{n,m}&=\sum_{k=0}^{n-m}\frac{(m+k)!}{m!}{n\brace m+k}=\sum_{j=m}^n\frac{j!}{m!}{n\brace j},\\
&=\sum_{j=0}^n\binom{n}{j}{j\brace m}\F_{n-j},
\end{align}
wherethe Cauchy product of the Stirling EGF $(e^x-1)^m/m!$ with the Fubini EGF $1/(2-e^x)$. The initial values of $MF_{n,m}$ are displayed in Table 5.

\begin{table}[h]
\centering
\caption{Initial values of the mixed Fubini numbers $\MF_{n,m}$}
\begin{tabular}{c|ccccc}
\toprule
$n\backslash m$ & 1 & 2 & 3 & 4 & 5\\
\midrule
1 & 1 & 0 & 0 & 0 & 0\\
2 & 3 & 1 & 0 & 0 & 0\\
3 & 13 & 6 & 1 & 0 & 0\\
4 & 75 & 37 & 10 & 1 & 0\\
5 & 541 & 270 & 85 & 15 & 1\\
\bottomrule
\end{tabular}
\end{table}

While classical Fubini numbers (ordered Bell numbers, \href{https://oeis.org/A000670}{OEIS A000670}) are given by $\F_n = \sum_{j=0}^{n} j! {n \brace j}$, the mixed Fubini numbers refine this construction by ordering only the labeled component. Consequently, $\mathcal{MF}_{n,m}$ provides a natural bridge connecting mixed Bell-type and Fubini-type enumerations.
\begin{theorem}
\label{thm:MF-EGF-symbolic}
For $m\ge1$, let $\MF_{n,m}$ denote the number of labeled structures consisting of a set of exactly $m$ nonempty unordered blocks together with an ordered sequence of an arbitrary number of additional nonempty blocks. Then its EGF is given by
\begin{equation}
\MF_m(x)=\sum_{n\ge0}\MF_{n,m}\frac{x^n}{n!}=\frac{(e^x-1)^m}{m!(2-e^x)}.
\end{equation}
\end{theorem}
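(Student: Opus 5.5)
The plan is to use the symbolic method for labelled structures, namely the product and sequence constructions. After that, a direct coefficient check will confirm agreement with the closed forms for $\MF_{n,m}$ given just before the theorem.

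First, the structure described in the statement is an ordered pair $(\alpha,\beta)$. Here $\alpha$ is a set of exactly $m$ nonempty unordered blocks and $\beta$ is a sequence of nonempty blocks. The two components are relabelled on a splitting of $[n]$ into two disjoint subsets. A nonempty block has EGF $e^x-1$. By the labelled set construction with exactly $m$ components, $\alpha$ has EGF $(e^x-1)^m/m!$. This is the EGF of ${n\brace m}$, which is already used in the paper. By the labelled sequence construction, $\beta$ has EGF
\[
\sum_{r\ge0}(e^x-1)^r=\frac{1}{1-(e^x-1)}=\frac{1}{2-e^x},
\]
which is the Fubini EGF. This formal geometric series is legitimate because $e^x-1$ has zero constant term. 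The labelled product rule then gives
\[
\MF_m(x)=\frac{(e^x-1)^m}{m!}\cdot\frac{1}{2-e^x}.
\]
Extracting coefficients by the binomial convolution reproduces $\MF_{n,m}=\sum_j\binom{n}{j}{j\brace m}\F_{n-j}$.

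Second, I would check consistency with Definition~\ref{def:mixed-fubini}, which is the step needing care. Here I expand
\[
\frac{(e^x-1)^m}{m!}\sum_{k\ge0}(e^x-1)^k=\sum_{k\ge0}\frac{(e^x-1)^{m+k}}{m!}.
\]
Using $(e^x-1)^{j}=j!\sum_n{n\brace j}x^n/n!$, the coefficient of $x^n/n!$ is $\sum_{k\ge0}\frac{(m+k)!}{m!}{n\brace m+k}$. The sum truncates at $k=n-m$ because ${n\brace j}=0$ for $j>n$. This matches the formula $\sum_{j=m}^n\frac{j!}{m!}{n\brace j}$.

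The main obstacle is the combinatorial identification. A structure with $m$ unordered blocks and $k$ ordered blocks is counted by $\frac{(m+k)!}{m!}{n\brace m+k}$: choose a partition into $m+k$ blocks, then choose which $k$ blocks form the sequence and their order, which gives $\binom{m+k}{k}k!$ ways. This is exactly the formula for $\mathcal S(n;(m,1^k))$ quoted in the paper. So the structure in the statement agrees with the definition, and the EGF computation completes the proof.
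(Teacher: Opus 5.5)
Your proof is correct and follows the paper's own argument: both write the structure as the labelled product of $\mathrm{SET}_m$ and $\mathrm{SEQ}$ over the class of nonempty blocks (EGF $e^x-1$), which gives $\frac{(e^x-1)^m}{m!}\cdot\frac{1}{2-e^x}$. Your additional coefficient check against Definition~\ref{def:mixed-fubini}, using $\binom{m+k}{k}k!=(m+k)!/m!$, is also correct; the paper does not include it in this proof.
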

\begin{proof}
We derive the EGF using the symbolic method for labeled combinatorial classes \cite{FlajoletSedgewick}. Let $Z$ denote the atomic labeled class of size $1$. The class of nonempty sets of labeled atoms, representing a single nonempty block, is $\mathcal{A}=\mathrm{SET}_{\ge1}(Z)$, with EGF $A(x)=e^x-1$. The mixed Fubini structure decomposes into two independent components: a set of exactly $m$ nonempty blocks, $\mathrm{SET}_m(\mathcal{A})$, with EGF $A(x)^m/m!=(e^x-1)^m/m!$; and an ordered sequence of an arbitrary number of nonempty blocks, $\mathrm{SEQ}(\mathcal{A})$, with EGF $\sum_{k\ge0}A(x)^k=1/(1-(e^x-1))=1/(2-e^x)$. As a labeled product, $\MF_m(x)=\frac{(e^x-1)^m}{m!}\cdot\frac{1}{2-e^x}$.
\end{proof}
\begin{theorem}
\label{thm:fubini_system}
For $m \ge 0$, let $\F_m(x) = \sum_{n \ge 0} \mathcal{MF}_{n,m} \frac{x^n}{n!} = \frac{(e^x-1)^m}{m!(2-e^x)}$ be the exponential generating function of the mixed Fubini numbers. For $m \ge 1$, the following relations hold
\begin{enumerate}
    \item Differential relation 
    \begin{equation}
        \F_m'(x) = e^x \F_{m-1}(x) + \frac{e^x}{2-e^x} \F_m(x).
        \label{eq:fubini_diff_sys}
    \end{equation}
    \item Convolution recurrence 
    \begin{equation}
        \mathcal{MF}_{n+1,m} = \sum_{i=0}^{n} \binom{n}{i} \mathcal{MF}_{i,m-1} + 2 \sum_{i=0}^{n} \binom{n}{i} \F_i \mathcal{MF}_{n-i,m} - \mathcal{MF}_{n,m},
        \label{eq:fubini_conv_correct}
    \end{equation}
    where $\F_i = \mathcal{MF}_{i,0}$ denotes the classical Fubini number.
    \item Algebraic recurrence 
    \begin{equation}
        \sum_{i=0}^{n} \binom{n}{i} \mathcal{MF}_{i,m} = 2 \mathcal{MF}_{n,m} - {n \brace m}.
        \label{eq:fubini_algebraic_recurrence}
    \end{equation}
\end{enumerate}
Boundary values are given by $\mathcal{MF}_{n,0} = \F_n$, $\mathcal{MF}_{0,0} = 1$, and $\mathcal{MF}_{n,m} = 0$ for $m > n$.
\end{theorem}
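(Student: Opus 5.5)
The plan is to work entirely at the level of exponential generating functions, starting from the closed form $\F_m(x)=\frac{(e^x-1)^m}{m!(2-e^x)}$ of Theorem~\ref{thm:MF-EGF-symbolic}. That theorem was stated for $m\ge1$, but the same symbolic argument with $\mathrm{SET}_0(\mathcal{A})$ trivial gives $\F_0(x)=1/(2-e^x)$, the Fubini EGF. This settles the boundary value $\mathcal{MF}_{n,0}=\F_n$, and in particular $\mathcal{MF}_{0,0}=1$. For $m>n$, the factor $(e^x-1)^m=x^m+O(x^{m+1})$ forces the coefficient of $x^n$ to vanish, so $\mathcal{MF}_{n,m}=0$. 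I will write the EGF as $\F_m(x)$ and the Fubini numbers as $\F_i$, keeping the argument $x$ explicit to avoid the notational clash.

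For (1), I will differentiate the product $\frac{(e^x-1)^m}{m!}\cdot(2-e^x)^{-1}$ with the product rule. The first factor gives $e^x\frac{(e^x-1)^{m-1}}{(m-1)!}(2-e^x)^{-1}=e^x\F_{m-1}(x)$. The second gives $\frac{(e^x-1)^m}{m!}\cdot e^x(2-e^x)^{-2}=\frac{e^x}{2-e^x}\F_m(x)$. This is exactly \eqref{eq:fubini_diff_sys}.

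For (2), the key step is the identity $\frac{e^x}{2-e^x}=\frac{2}{2-e^x}-1$, which rewrites (1) as
\[
\F_m'(x)=e^x\F_{m-1}(x)+2\,\F_0(x)\F_m(x)-\F_m(x).
\]
I will then extract $n![x^n]$ from each term:
\begin{itemize}
\item the left side gives $\mathcal{MF}_{n+1,m}$;
\item $e^x\F_{m-1}(x)$ is a binomial convolution with the all-ones sequence, giving $\sum_i\binom{n}{i}\mathcal{MF}_{i,m-1}$;
\item $\F_0(x)\F_m(x)$ is the binomial convolution $\sum_i\binom{n}{i}\F_i\mathcal{MF}_{n-i,m}$;
\item the last term gives $\mathcal{MF}_{n,m}$.
\end{itemize}
Together these yield \eqref{eq:fubini_conv_correct}.

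For (3), I will clear the denominator: $(2-e^x)\F_m(x)=\frac{(e^x-1)^m}{m!}$, which is the Stirling EGF $\sum_n{n\brace m}\frac{x^n}{n!}$. Comparing coefficients, $2\mathcal{MF}_{n,m}-\sum_i\binom{n}{i}\mathcal{MF}_{i,m}={n\brace m}$, which rearranges to \eqref{eq:fubini_algebraic_recurrence}.

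There is no real obstacle; everything is formal power series manipulation. The only points needing care are spotting the partial-fraction rewrite of $e^x/(2-e^x)$, which is what makes (2) a recurrence with integer coefficients, and checking that the $m=1$ case of (1)–(2) correctly uses $\F_0(x)$ as the Fubini EGF.
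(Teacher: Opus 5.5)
Your proposal is correct and follows the paper's proof essentially step for step. It differentiates by the product rule for (1), rewrites $\frac{e^x}{2-e^x}=2\F_0(x)-1$ and extracts coefficients by binomial convolution for (2), and clears the denominator against the Stirling EGF $\frac{(e^x-1)^m}{m!}$ for (3). Your explicit verification of the boundary values, via $\F_0(x)=1/(2-e^x)$ and the fact that $(e^x-1)^m$ has order $m$, is a small addition the paper only asserts.
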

\begin{proof}
Differentiating $\F_m(x) = \frac{(e^x-1)^m}{m!(2-e^x)}$  \eqref{eq:fubini_diff_sys}gives
\[
   \F_m'(x) = \frac{e^x(e^x-1)^{m-1}}{(m-1)!(2-e^x)} + \frac{e^x(e^x-1)^m}{m!(2-e^x)^2} = e^x \F_{m-1}(x) + \frac{e^x}{2-e^x} \F_m(x).
\]
Noting that $\frac{e^x}{2-e^x} = \frac{2}{2-e^x} - 1 = 2\F_0(x) - 1$, the differential equation rewrites as $\F_m'(x) = e^x \F_{m-1}(x) + (2F_0(x)-1)\F_m(x)$. Extracting coefficients of $x^n/n!$ via the EGF product rule establishes \eqref{eq:fubini_conv_correct}.
Finally, multiplying both sides of the definition by $2-e^x$ gives $(2-e^x)\F_m(x) = \frac{(e^x-1)^m}{m!} = \sum_{n \ge 0} {n \brace m} \frac{x^n}{n!}$. Equating coefficients using the Cauchy product for $e^x \F_m(x)$ yields the algebraic recurrence \eqref{eq:fubini_algebraic_recurrence}.
\end{proof}
\begin{corollary}
\label{cor:MF-Stirling}
For all $n, m \ge 0$, 
\begin{equation}
    2\mathcal{MF}_{n,m} - \sum_{i=0}^{n} \binom{n}{i}\mathcal{MF}_{i,m} = {n \brace m}.
    \label{eq:MF-Stirling}
\end{equation}
\end{corollary}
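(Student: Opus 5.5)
This corollary is a rearrangement of part (3) of Theorem~\ref{thm:fubini_system}. Still, I would give a self-contained derivation from the generating function in Theorem~\ref{thm:MF-EGF-symbolic}, so that the edge cases $m=0$ and $n<m$ are visibly covered.

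The plan is to start from
\[
\MF_m(x)=\frac{(e^x-1)^m}{m!\,(2-e^x)}.
\]
Multiplying by $2-e^x$ gives the identity of formal power series
\[
2\MF_m(x)-e^x\MF_m(x)=\frac{(e^x-1)^m}{m!}=\sum_{n\ge0}{n\brace m}\frac{x^n}{n!}.
\]
The last equality is the standard vertical generating function for Stirling numbers of the second kind. It holds for every $m\ge0$, including $m=0$, where both sides equal $1$ and ${n\brace 0}=\delta_{n,0}$.

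Next I extract the coefficient of $x^n/n!$ on both sides. The term $2\MF_m(x)$ contributes $2\MF_{n,m}$. By the binomial convolution rule for products of EGFs, and since $e^x$ has all coefficients equal to $1$, the product $e^x\MF_m(x)$ contributes $\sum_{i=0}^n\binom{n}{i}\MF_{i,m}$. Equating coefficients yields \eqref{eq:MF-Stirling} directly.

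It remains to check the boundary cases. For $m=0$ we have $\MF_0(x)=1/(2-e^x)$, the Fubini EGF, and the identity reduces to the classical recurrence $2\F_n-\sum_i\binom ni\F_i=\delta_{n,0}$. For $n<m$ every term vanishes, since $\MF_{i,m}=0$ for $i<m$ and ${n\brace m}=0$.

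The only step needing any care is making sure the EGF of Theorem~\ref{thm:MF-EGF-symbolic} applies for $m=0$ as well. That theorem is stated for $m\ge1$, while Theorem~\ref{thm:fubini_system} defines $\F_0(x)=1/(2-e^x)$, which agrees with $\MF_{n,0}=\F_n$. Once this is noted, no real obstacle remains.
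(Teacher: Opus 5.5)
Your proposal is correct and follows the paper's proof. Both multiply $\MF_m(x)$ by $2-e^x$, recognize $(e^x-1)^m/m!$ as the vertical generating function $\sum_{n\ge0}{n\brace m}x^n/n!$, and compare coefficients of $x^n/n!$ using the binomial convolution for $e^x\MF_m(x)$. Your explicit check of $m=0$, via $\F_0(x)=1/(2-e^x)$ and ${n\brace 0}=\delta_{n,0}$, is a harmless clarification the paper leaves implicit, and the $n<m$ check is already covered by the coefficient comparison.
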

\begin{proof}
Multiplying the identity $(2-e^x)\F_m(x) = \frac{(e^x-1)^m}{m!} = \sum_{n \ge 0} {n \brace m} \frac{x^n}{n!}$ and extracting coefficients via the Cauchy product yields \eqref{eq:MF-Stirling}.
\end{proof}
\begin{corollary}
\label{cor:MF-binomial}
For all $n \ge m \ge 0$,
\begin{equation}
\MF_{n,m}=\sum_{j=0}^n\binom{n}{j}{j\brace m}\F_{n-j},
 \label{eq:MF-binomial}
\end{equation}
where $F_n$ denotes the classical Fubini numbers.
\end{corollary}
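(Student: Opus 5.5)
The plan is to read the identity off the product form of the exponential generating function in Theorem~\ref{thm:MF-EGF-symbolic}. By that theorem,
\[
\sum_{n\ge0}\MF_{n,m}\frac{x^n}{n!}=\frac{(e^x-1)^m}{m!}\cdot\frac{1}{2-e^x}.
\]
Each factor is an EGF we already know:
\begin{itemize}
\item the first factor is the classical Stirling EGF $\frac{(e^x-1)^m}{m!}=\sum_{j\ge0}{j\brace m}\frac{x^j}{j!}$, which is also the right-hand side used in Corollary~\ref{cor:MF-Stirling};
\item the second factor is the Fubini EGF $\frac{1}{2-e^x}=\sum_{i\ge0}\F_i\frac{x^i}{i!}$, which is the case $m=0$ of Theorem~\ref{thm:fubini_system}, with $\MF_{i,0}=\F_i$.
\end{itemize}

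Next I would multiply the two series and use the binomial (Cauchy) product rule for exponential generating functions:
\[
\Bigl(\sum_j a_j\tfrac{x^j}{j!}\Bigr)\Bigl(\sum_i b_i\tfrac{x^i}{i!}\Bigr)=\sum_n\Bigl(\sum_{j=0}^n\binom{n}{j}a_jb_{n-j}\Bigr)\tfrac{x^n}{n!}.
\]
Taking $a_j={n\brace m}$-type coefficients ${j\brace m}$ and $b_i=\F_i$, and comparing coefficients of $x^n/n!$, gives exactly
\[
\MF_{n,m}=\sum_{j=0}^n\binom{n}{j}{j\brace m}\F_{n-j}.
\]
The restriction $n\ge m$ is harmless: the terms with $j<m$ vanish because ${j\brace m}=0$ there. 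For $n<m$ both sides are zero anyway.

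As a consistency check, I would also offer a combinatorial argument matching the symbolic description in Theorem~\ref{thm:MF-EGF-symbolic}. Choose the $j$-subset of $[n]$ carrying the $m$ unordered blocks, in $\binom{n}{j}{j\brace m}$ ways. Then place an ordered set partition on the remaining $n-j$ elements, in $\F_{n-j}$ ways.

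Alternatively, one could start from the closed form $\MF_{n,m}=\sum_{j}\frac{j!}{m!}{n\brace j}$ and apply the Vandermonde-type identity $\binom{j}{m}{n\brace j}$-convolution for Stirling numbers. The EGF route is cleaner, though.

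There is no real obstacle. The only point needing care is the index convention: the sum must pair ${j\brace m}$ with $\F_{n-j}$ (not with $\F_j$), and the boundary terms $j<m$ must vanish, so that the stated range $0\le j\le n$ is correct.
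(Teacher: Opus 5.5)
Your proposal is correct and is essentially the paper's own proof: you factor the EGF of Theorem~\ref{thm:MF-EGF-symbolic} as the Stirling EGF $(e^x-1)^m/m!$ times the Fubini EGF $1/(2-e^x)$, then read off the coefficient of $x^n/n!$ via the binomial (Cauchy) product. Your observations that the terms with $j<m$ vanish and that both sides are zero for $n<m$, together with the combinatorial check, are correct extras not needed for the argument.
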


\begin{proof}
Expressing $\F_m(x) = \frac{(e^x-1)^m}{m!} \cdot \frac{1}{2-e^x}$ as the product of the EGFs for Stirling numbers of the second kind and classical Fubini numbers, the formula \eqref{eq:MF-binomial} follows directly from the Cauchy product.
\end{proof}
The mixed Fubini numbers $\mathcal{MF}_{n,m}$ bridge unordered and ordered partition structures, where $m$ counts the unlabeled blocks and the remaining blocks form a linear sequence.
\begin{remark}
\label{rem:MF-classical}
For $m=0$, the EGF reduces to $\F_0(x) = (2-e^x)^{-1}$, yielding the classical Fubini numbers $\mathcal{MF}_{n,0} = \F_n$.
\end{remark}

\begin{corollary}
\label{cor:fubini-reduction}
The classical Fubini numbers satisfy for $n \ge 1$
\begin{equation}
    2\F_n = \sum_{i=0}^{n} \binom{n}{i}\F_i.
    \label{eq:fubini-classical-recurrence}
\end{equation}
\end{corollary}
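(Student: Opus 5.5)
The plan is to specialize Corollary~\ref{cor:MF-Stirling} to $m=0$ and use Remark~\ref{rem:MF-classical}. With $m=0$, that corollary reads
\[
2\MF_{n,0}-\sum_{i=0}^{n}\binom{n}{i}\MF_{i,0}={n\brace 0}.
\]
Remark~\ref{rem:MF-classical} identifies $\MF_{i,0}=\F_i$ for every $i\ge 0$. The Stirling number ${n\brace 0}$ equals the Kronecker delta $\delta_{n,0}$, because a nonempty set has no partition into zero blocks. Hence for $n\ge1$ the right-hand side vanishes, and we obtain $2\F_n=\sum_{i=0}^n\binom{n}{i}\F_i$, which is exactly \eqref{eq:fubini-classical-recurrence}.

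As an independent check, and to avoid relying on the $m=0$ case of the corollary (Theorem~\ref{thm:fubini_system} was stated for $m\ge1$), I would also argue directly from the EGF $\F_0(x)=1/(2-e^x)$. Multiplying by $2-e^x$ gives
\[
2\F_0(x)-e^x\F_0(x)=1.
\]
The coefficient of $x^n/n!$ in $e^x\F_0(x)$ is $\sum_i\binom{n}{i}\F_i$ by the Cauchy product for EGFs. The coefficient of $x^n/n!$ in the constant $1$ is $\delta_{n,0}$. Comparing coefficients for $n\ge1$ yields the claim.

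The only point requiring care is the boundary case. At $n=0$ the identity would read $2=1$, which is false, so the hypothesis $n\ge1$ is essential. This corresponds precisely to ${0\brace 0}=1$, i.e.\ to the constant term on the right-hand side of the EGF identity. There is no genuine obstacle beyond noting this exclusion.
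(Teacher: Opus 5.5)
Your proposal is correct, and your direct argument is the paper's proof: multiply $\F_0(x)=1/(2-e^x)$ by $2-e^x$, extract the coefficient of $x^n/n!$ using the Cauchy product with $e^x$, and note that the constant $1$ contributes only at $n=0$. Your preliminary specialization of Corollary~\ref{cor:MF-Stirling} to $m=0$, using ${n\brace 0}=\delta_{n,0}$, is an equally valid shortcut, since that corollary is stated for all $m\ge0$.
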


\begin{proof}
From $(2-e^x)\F_0(x) = 1$, extracting coefficients for $n \ge 1$ yields \eqref{eq:fubini-classical-recurrence}.
\end{proof}

Summing $\F_m(x)$ over all $m \ge 0$ yields the global mixed Fubini generating function
\begin{equation}
    \sum_{n \ge 0} \mathcal{MF}_n^{\mathrm{tot}} \frac{x^n}{n!} = \frac{\exp(e^x-1)}{2-e^x},
    \label{eq:MF-global}
\end{equation}
where $\mathcal{MF}_n^{\mathrm{tot}} = \sum_{m=0}^{n} \mathcal{MF}_{n,m}$.

\begin{remark}
\label{rem:MF-global}
This global EGF combines an exponential set construction (Bell numbers $B_r$) with a sequence construction (Fubini numbers $\F_{n-r}$), differing from $(3-2e^x)^{-1}$.
\end{remark}

\begin{corollary}
\label{cor:MF-combinatorial-convolution}
The total mixed Fubini numbers satisfy the convolution formula:
\begin{equation}
    \mathcal{MF}^{\mathrm{tot}}_n = \sum_{r=0}^{n} \binom{n}{r} B_r \F_{n-r}.
    \label{eq:MF-Bell-Fubini}
\end{equation}
\end{corollary}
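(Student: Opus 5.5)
The plan is to read the convolution directly off the global generating function \eqref{eq:MF-global} by factoring it as a product of two exponential generating functions. The first step is to justify \eqref{eq:MF-global}. By Theorem~\ref{thm:fubini_system}, $\F_m(x)=\frac{(e^x-1)^m}{m!(2-e^x)}$ for every $m\ge0$, the case $m=0$ being Remark~\ref{rem:MF-classical}. Since $e^x-1$ has zero constant term, $(e^x-1)^m$ has $x$-adic order at least $m$. Therefore $\sum_{m\ge0}\F_m(x)$ converges as a formal power series. Summing the series in $m$ gives
\[
\sum_{m\ge0}\F_m(x)=\frac{1}{2-e^x}\sum_{m\ge0}\frac{(e^x-1)^m}{m!}=\frac{\exp(e^x-1)}{2-e^x}.
\]
The coefficient of $x^n/n!$ on the left is $\sum_{m\ge0}\MF_{n,m}$. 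By the boundary condition $\MF_{n,m}=0$ for $m>n$ in Theorem~\ref{thm:fubini_system}, this sum equals $\MF_n^{\mathrm{tot}}$.

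The second step is to recognise the two factors. We have $\exp(e^x-1)=\sum_{r\ge0}B_r\frac{x^r}{r!}$; this is the case $k=1$ of \eqref{eq:general-mixed-bell-egf}, since $\GB^{(1)}_n=B_n$. Likewise $\frac{1}{2-e^x}=\F_0(x)=\sum_{s\ge0}\F_s\frac{x^s}{s!}$ by Remark~\ref{rem:MF-classical}. The product of two exponential generating functions has coefficient of $x^n/n!$ equal to the binomial convolution $\sum_{r=0}^n\binom{n}{r}B_r\F_{n-r}$. Comparing coefficients gives \eqref{eq:MF-Bell-Fubini}.

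I do not expect a real obstacle. The only point needing care is the formal interchange of the sums over $m$ and $n$, which the order argument in the first step handles. As a cross-check one could argue combinatorially instead. Sum Corollary~\ref{cor:MF-binomial} over $m$ and use $\sum_m{j\brace m}=B_j$. This gives $\sum_j\binom{n}{j}B_j\F_{n-j}$ directly, the same identity with $j=r$.
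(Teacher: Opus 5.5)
Your proof is correct, but it takes a different route from the paper. The paper argues by direct counting: the unordered component of a total mixed Fubini structure on $[n]$ occupies some $r$-element subset. That subset is set-partitioned in $B_r$ ways, the complementary $n-r$ elements carry an ordered set partition counted by $\F_{n-r}$, and summing over $r$ gives \eqref{eq:MF-Bell-Fubini}. You instead first justify the global generating function \eqref{eq:MF-global}, which the paper only asserts. You use the $x$-adic order of $(e^x-1)^m$ to make the formal sum over $m$ legitimate, and then read the convolution off the product $\exp(e^x-1)\cdot(2-e^x)^{-1}$. The paper's argument is shorter and explains the identity bijectively. However, it relies tacitly on the structural description of $\mathcal{MF}_{n,m}$ from Theorem~\ref{thm:MF-EGF-symbolic}, and on the fact that letting $m$ range over all $m\ge 0$ (including an empty unordered component, $B_0=1$) amounts to an arbitrary set partition of the chosen subset. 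Your argument is purely formal, handles these boundary cases automatically, and supplies the missing derivation of \eqref{eq:MF-global} along the way. Your cross-check via Corollary~\ref{cor:MF-binomial} and $\sum_m {j\brace m}=B_j$ is the algebraic counterpart of the paper's combinatorial proof, with $j$ playing the role of the size $r$ of the subset carrying the unordered blocks.
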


\begin{proof}
Choosing $r$ elements for the unlabeled component gives a set partition in $B_r$ ways, while the remaining $n-r$ elements form an ordered sequence in $\F_{n-r}$ ways. Summing over all $r$ yields \eqref{eq:MF-Bell-Fubini}.
\end{proof}
\section{Congruences and Modular Properties of Mixed Bell-Type Sequences}
\label{sec:asymptotics-modular}

In this section, we analyze the asymptotic growth of the mixed Bell-type sequences as $n \to \infty$ with $k$ and $m$ fixed. Let $W(x)$ denote the Lambert $W$-function satisfying $W(x)e^{W(x)} = x$.

\begin{theorem}
\label{thm:asymptotics-mixed-bell}
For fixed integers $k,m\ge1$ and $n\to\infty$, the asymptotic growth of the mixed Bell-type sequences is given by the following estimates.
\begin{enumerate}
\item \textbf{General mixed Bell numbers.} Setting $r_n=W(n/k)$,
\[
\GB^{(k)}_n\sim\frac{1}{\sqrt{1+r_n}}\left(\frac{n}{e\,r_n}\right)^n\exp_k(e^{r_n}-1).
\]
\item \textbf{Mixed Bell numbers of order $k$.} $\displaystyle \MB_{n,k}\sim\frac{1}{k!}\left(\frac{n}{W(n)}\right)^k B_n$, where $B_n$ is the $n$-th Bell number.
\item \textbf{Total mixed Bell numbers.} $\displaystyle \TB_n\sim(e-1)\F_n\sim\frac{e-1}{2(\log2)^{n+1}}\,n!$
\item \textbf{Mixed Fubini numbers.} $\displaystyle \MF_{n,m}\sim\frac{n!}{2\,m!\,(\log2)^{n+1}}$, yielding the limiting ratio $\MF_{n,m}/\F_n\to1/m!$.
\end{enumerate}
\end{theorem}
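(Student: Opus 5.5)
The plan is to treat all four estimates by singularity analysis of the exponential generating functions \eqref{eq:general-mixed-bell-egf}, \eqref{eq:MB-order-egf}, \eqref{eq:TMB-egf} and the one in Theorem~\ref{thm:MF-EGF-symbolic}. There are two regimes. The functions in (3) and (4) are meromorphic and have a dominant simple pole at $x=\log 2$, so they call for a transfer (Darboux) argument. The functions in (1) and (2) are entire and of Bell type, so they call for the saddle-point method in its Moser--Wyman/Hayman form. I would do the meromorphic cases first, because they are routine.

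For (4), the zeros of $2-e^x$ are the points $\log 2+2\pi i j$. All of them lie on $\operatorname{Re}x=\log 2$, and the only one on the circle $|x|=\log 2$ is $x=\log 2$, so this point is the unique dominant singularity. Near it $2-e^x\sim -2(x-\log 2)=2\log 2\,(1-x/\log 2)$, and the numerator $(e^x-1)^m/m!$ is analytic there with value $1/m!$. Subtracting the polar part leaves a function analytic in a larger disc, which gives
\[
[x^n]\MF_m(x)=\frac{1}{2m!\log 2}(\log 2)^{-n}\bigl(1+O(\rho^n)\bigr)
\]
for some $\rho<1$. Multiplying by $n!$ gives (4). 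The case $m=0$ recovers $\F_n\sim n!/(2(\log2)^{n+1})$, and the limit $\MF_{n,m}/\F_n\to 1/m!$ follows at once. For (3) the same argument applies, since the factor $(e^{e^x-1}-1)(e^x-1)$ is entire and equals $(e-1)\cdot 1$ at $x=\log 2$. This yields $\TB_n\sim(e-1)\F_n$, and the explicit form then comes from the $m=0$ case.

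For (1), I would write $\GB_n^{(k)}=\frac{n!}{2\pi i}\oint \exp(k(e^x-1))x^{-n-1}\,dx$ and take the positive real saddle point $r$, which solves $kre^{r}=n$, that is $r=r_n=W(n/k)$. The function $\exp(k(e^x-1))$ is Hayman-admissible (it is $\exp$ of an entire function with nonnegative coefficients), so the standard estimate applies:
\[
[x^n]\sim\frac{e^{k(e^r-1)}}{r^n\sqrt{2\pi b(r)}},\qquad b(r)=k r(1+r)e^r=n(1+r).
\]
Combining this with Stirling's formula $n!\sim\sqrt{2\pi n}(n/e)^n$ cancels $\sqrt{2\pi n}$ and gives
\[
\GB_n^{(k)}\sim\frac{1}{\sqrt{1+r_n}}\Bigl(\frac{n}{er_n}\Bigr)^n e^{k(e^{r_n}-1)}.
\]
I read the notation $\exp_k(\cdot)$ in the statement as $\exp(k\,\cdot)$. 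The one genuine check here is the tail bound away from the real axis, namely $|\exp(k(e^{re^{i\theta}}-1))|\le \exp(k(e^r\cos\theta\ldots))$. It is the same check as for the Bell numbers, and it is the Hayman admissibility verification.

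For (2), I would write the EGF as $(e^x-1)^k e^{e^x-1}-(e^x-1)^k$. The second term contributes only $O(k^n)$, which is negligible. The first term is $g(x)\,e^{e^x-1}$ with $g$ entire and slowly varying on the scale of the saddle. So I would evaluate it at the Bell saddle point $r=W(n)$, where $e^r=n/W(n)$. Hayman's theorem allows such a polynomially bounded multiplier and gives $[x^n]\,g\,e^{e^x-1}\sim g(r)[x^n]e^{e^x-1}$, hence $\MB_{n,k}\sim (e^r-1)^k B_n\sim (n/W(n))^kB_n$. The main obstacle is the constant. With the EGF \eqref{eq:MB-order-egf} as written, which corresponds to labeled blocks and matches the explicit factor $k!$ in the convolution formula for $\MB_{n,k}$, this argument produces no factor $1/k!$. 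The $1/k!$ in the statement would correspond to the normalization $(e^x-1)^k/k!$. So I would first recheck which normalization is intended, and I would prove the estimate for $(e^x-1)^k(e^{e^x-1}-1)$ with the factor adjusted accordingly. The technical step that needs care is justifying the multiplier transfer uniformly, which requires verifying that $g(re^{i\theta})$ stays bounded relative to $g(r)$ on the central arc $|\theta|\le b(r)^{-2/5}$.
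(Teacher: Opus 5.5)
Your treatment of (1), (3) and (4) follows the paper's argument. For (1) this is the saddle point at $r_n=W(n/k)$ with $b(r_n)=n(1+r_n)$. For (3) and (4) it is the simple pole at $\log 2$, where the entire numerator takes the value $e-1$, resp.\ $1/m!$. Your constants are right.

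For (2) your suspicion is justified: the defect is in the statement, not in your argument. Since $\mathcal{S}(n;(m,1^k))=\frac{(m+k)!}{m!}{n\brace m+k}$, one has $\MB_{n,k}=\sum_J (J)_k{n\brace J}-k!{n\brace k}$ with the falling factorial $(J)_k$. Hence $\MB_{n,k}/B_n=\mathbb{E}[(J_n)_k]-k!{n\brace k}/B_n$. The paper's proof instead uses $\MB_{n,k}/B_n=\mathbb{E}[\binom{J_n}{k}]$, and that identity is exactly where the spurious $1/k!$ enters. The paper's own table rules it out: it would give $\MB_{4,2}=\sum_J\binom{J}{2}{4\brace J}=31$ rather than the tabulated $48=62-2\cdot 7$. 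The correct estimate is $\MB_{n,k}\sim (n/W(n))^kB_n$, which is what you obtain.

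For (2) your route also differs from the paper's, which is probabilistic: $J_n$ concentrates at $\mu=\mathbb{E}J_n=B_{n+1}/B_n-1\sim n/W(n)$. Made rigorous, this goes most cleanly through Harper's theorem that $T_n(y)$ has only real nonpositive zeros. Then $J_n$ is a sum of independent Bernoulli variables, whence $\mathbb{E}[(J_n)_k]=\mu^k\bigl(1+O(1/\mu)\bigr)$. That route still needs the asymptotics of $\mu$, which is itself a saddle-point computation.

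Your multiplier argument at the Bell saddle $r=W(n)$ is self-contained and produces the constant directly. The step you single out is routine here. Since $\log g(re^{i\theta})-\log g(r)=kr(e^{i\theta}-1)+o(1)$, on your arc $|\theta|\le b(r)^{-2/5}$ (with $b(r)=n(1+r)$ and $r\sim\log n$) one has $kr|\theta|\to0$. Hence $g(re^{i\theta})/g(r)\to1$ uniformly there. Off the arc, $|g(re^{i\theta})|\le(e^r+1)^k=(1+o(1))g(r)$, so the tails are controlled by those of $e^{e^x-1}$. This pair of estimates is the relevant condition, not polynomial boundedness.

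One small correction in (1): being $\exp$ of an entire function with nonnegative coefficients does not imply Hayman admissibility; $e^{x^2}$ is a counterexample. The correct justification is closure. The function $e^x$ is admissible, and admissibility is preserved by adding a constant, by multiplying by $k>0$, and by exponentiation.
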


\begin{proof}
(1) Applying the saddle-point method to the EGF $\exp_k(e^x-1)$, the saddle point $r_n$ satisfies $kr_ne^{r_n}=n$, giving $r_n=W(n/k)$; Cauchy's integral formula yields the estimate. (2) Using the probabilistic representation $\MB_{n,k}/B_n=E\big[\binom{J_n}{k}\big]$, where $J_n$ is the number of blocks in a random set partition, concentration of $J_n$ around $n/W(n)$ yields the estimate. (3)--(4) The total mixed Bell and mixed Fubini numbers are governed by the dominant singularity of their EGFs at $x=\rho=\log2$; transfer theorems for singularity analysis give the stated asymptotics.
\end{proof}
The exponential generating functions yield Dobiński-type representations expressing these mixed sequences through Poisson expectations and finite-difference operators.
\begin{theorem}
\label{thm-unified-dobinski}
Let $X_\lambda \sim \operatorname{Poisson}(\lambda)$. The following representations hold.
\begin{enumerate}
    \item[\textnormal{(1)}] \textbf{General mixed Bell numbers.}
    \begin{equation}
        \mathcal{GB}_n^{(k)} = e^{-k}\sum_{r=0}^{\infty}\frac{r^n k^r}{r!} = \mathbb{E}\left[X_k^n\right].
        \label{eq-dobinski-GB}
    \end{equation}

    \item[\textnormal{(2)}] \textbf{Mixed Bell numbers of order $k$.}
    \begin{equation}
        \mathcal{MB}_{n,k} = \left.\frac{d^k}{dy^k}T_n(y)\right|_{y=1} = \sum_{j=0}^{k}(-1)^{k-j}\binom{k}{j}T_n(j),
        \label{eq-MB-finite-difference}
    \end{equation}
    where $T_n(y)$ denotes the $n$-th Touchard polynomial. Equivalently,
    \begin{equation}
        \mathcal{MB}_{n,k} = \sum_{j=0}^{k}(-1)^{k-j}\binom{k}{j}e^{-j}\sum_{r=0}^{\infty}\frac{r^n j^r}{r!}.
        \label{eq-MB-Dobinski-double}
    \end{equation}

    \item[\textnormal{(3)}] \textbf{Total mixed Bell numbers.}
    \begin{equation}
        \mathcal{TB}_n = \frac{1}{e}\sum_{r=0}^{\infty}\frac{r^n}{r!}(2^r-1) = \mathbb{E}\left[X^n(2^X-1)\right],
        \qquad X \sim \operatorname{Poisson}(1).
        \label{eq-TB-Poisson}
    \end{equation}

    \item[\textnormal{(4)}] \textbf{Mixed Fubini numbers.}
    \begin{equation}
        \mathcal{MF}_{n,m} = \sum_{j=m}^{n}\binom{n}{j}{j \brace m}\F_{n-j} = \frac{1}{2}\sum_{j=m}^{n}\binom{n}{j}{j \brace m}\sum_{\ell=0}^{\infty}\frac{\ell^{n-j}}{2^\ell},
        \label{eq-MF-Dobinski-double}
    \end{equation}
    where $F_r$ denotes the classical Fubini number.
\end{enumerate}
\end{theorem}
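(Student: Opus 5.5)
The plan is to prove every item the same way: compute the exponential generating function in $x$ of the right-hand side and compare it with the EGFs \eqref{eq:general-mixed-bell-egf}, \eqref{eq:MB-order-egf}, \eqref{eq:TMB-egf} and Theorem~\ref{thm:MF-EGF-symbolic}. The single tool used throughout is the Dobiński-type expansion
\[
\sum_{n\ge0}\Bigl(\sum_{r\ge0}\frac{r^n\lambda^r}{r!}\Bigr)\frac{x^n}{n!}=\sum_{r\ge0}\frac{(\lambda e^{x})^r}{r!}=e^{\lambda e^x}.
\]
It is legitimate to exchange the order of summation because all terms are nonnegative and the double series converges absolutely.

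For (1), take $\lambda=k$ and multiply by $e^{-k}$. The resulting EGF is $e^{k(e^x-1)}$, which is \eqref{eq:general-mixed-bell-egf}. Since $\Pr(X_k=r)=e^{-k}k^r/r!$, the series is exactly $\mathbb{E}[X_k^n]$.

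For (2), the argument runs through $T_n(y)$. Because $\sum_n T_n(y)x^n/n!=e^{y(e^x-1)}$, applying $\partial_y^k$ gives the EGF $(e^x-1)^k e^{y(e^x-1)}$; at $y=1$ this is $(e^x-1)^ke^{e^x-1}$. On the finite-difference side, each $T_n(j)$ is replaced by its value $\mathcal{GB}^{(j)}_n$ from (1); \eqref{eq-MB-Dobinski-double} then follows at once from \eqref{eq-MB-finite-difference} by substituting (1) for each $T_n(j)$. The operator identity between $\partial_y^k$ at $1$ and $\sum_j(-1)^{k-j}\binom kj T_n(j)$ will be checked at the level of EGFs, since $\sum_j(-1)^{k-j}\binom kj e^{j(e^x-1)}=(e^{e^x-1}-1)^k$.

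The main obstacle is matching normalisations in (2). The derivative side produces $(e^x-1)^ke^{e^x-1}$, whereas \eqref{eq:MB-order-egf} is $(e^x-1)^k(e^{e^x-1}-1)$. The difference is $(e^x-1)^k$, which is the EGF of $k!{n\brace k}$. I would first try to absorb this term via the relation $\mathcal{MB}_{n,k}=k!(D(n,k)-{n\brace k})$, checking against Table~\ref{tab:MB-values} which convention for $D(n,k)$ makes the identity exact. The finite-difference form has the analogous issue: its EGF is $(e^{e^x-1}-1)^k$ rather than the derivative EGF, so the equality between the two expressions in \eqref{eq-MB-finite-difference} must also be reconciled in this step. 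This is the point at which the statement must be verified most carefully.

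For (3), I would expand $e^{-1}\sum_r r^n(2^r-1)/r!$ with $\lambda=2$ and $\lambda=1$, which gives the EGF $e^{2e^x-1}-e^{e^x-1}$. I would then compare this with \eqref{eq:TMB-egf}, for instance through the Stirling form \eqref{eq:TB-stirling-form}, by writing $r^n=\sum_j{n\brace j}r^{\underline j}$ and evaluating the Poisson factorial moments $\mathbb{E}[X^{\underline j}2^X]$. Agreement of coefficients is again the delicate step and would be tested numerically against Table~\ref{tab:TB-values} before the general comparison.

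For (4), the first equality is Corollary~\ref{cor:MF-binomial}. The second follows by inserting the classical Fubini Dobiński formula $\F_r=\frac12\sum_{\ell\ge0}\ell^r/2^\ell$. That formula in turn comes from expanding $\frac{1}{2-e^x}=\frac12\sum_\ell (e^x/2)^\ell$ and reading off the coefficient of $x^r/r!$.
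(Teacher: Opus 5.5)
\textbf{Verdict: genuine gap.} Your handling of (1) and (4) is correct and is essentially the paper's argument: the Poisson/Dobiński expansion of $e^{\lambda e^x}$ for (1), and Corollary~\ref{cor:MF-binomial} plus the geometric expansion of $1/(2-e^x)$ for (4). The gap is in (2) and (3). The steps you postpone (``absorb this term'', ``reconcile'', ``test numerically'') cannot be completed, because both identities are false as stated. Your own EGF computations already show this.

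\emph{Item (2).} For $k\ge1$ these three series are pairwise different:
\begin{itemize}
\item $(e^x-1)^k e^{e^x-1}$, coming from the derivative at $y=1$;
\item $(e^{e^x-1}-1)^k$, coming from the forward difference;
\item $(e^x-1)^k(e^{e^x-1}-1)$, the defining EGF of $\mathcal{MB}_{n,k}$.
\end{itemize}
Concretely, for $n=3$, $k=1$ you get $T_3'(1)=10$ and $T_3(1)-T_3(0)=5$, while $\mathcal{MB}_{3,1}=9$ (Table~\ref{tab:MB-values}). Absorbing through $D(n,k)$ does not help. With $D(n,k)=\sum_j\binom{j}{k}{n\brace j}$ one has $k!D(n,k)=T_n^{(k)}(1)$. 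So the relation $\mathcal{MB}_{n,k}=k!(D(n,k)-{n\brace k})$ just says $T_n^{(k)}(1)=\mathcal{MB}_{n,k}+k!{n\brace k}=\widehat{MB}_{n,k}$. Meanwhile $\sum_j(-1)^{k-j}\binom{k}{j}T_n(j)=k!\sum_j{n\brace j}{j\brace k}$ is a third, different quantity. The double-series formula for $\mathcal{MB}_{n,k}$ is obtained from the finite-difference one by substitution, so it fails too.

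\emph{Item (3).} Your EGF $e^{2e^x-1}-e^{e^x-1}$ has $n$-th coefficient $e\,T_n(2)-B_n$. Since $T_n(2)\ge1$ is an integer, this is irrational for every $n$, so it can never equal the integer $\mathcal{TB}_n$. Already at $n=0$ it gives $e-1\neq 0=\mathcal{TB}_0$. No check against Table~\ref{tab:TB-values} and no Stirling-form comparison can close this.

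The paper's proof gets past these points only through invalid steps.
\begin{itemize}
\item In (2) it replaces $\mathcal{MB}_{n,k}$ by the unrestricted sum $k!\sum_{j\ge k}\binom{j}{k}{n\brace j}$, which drops the constraint $m\ge1$.
\item Also in (2) it invokes a ``Taylor formula'' $P^{(k)}(1)=\sum_j(-1)^{k-j}\binom{k}{j}P(j)$. This already fails for $P(y)=y^2$, $k=1$, where the left side is $2$ and the right side is $1$.
\item In (3) it writes $e^{2e^x-1}-e^{e^x-1}=e^{e^x-1}\bigl(e^{e^x-1}-1\bigr)$, but the right-hand side equals $e^{2e^x-2}-e^{e^x-1}$.
\item It then identifies that series with \eqref{eq:TMB-egf}, which is also false.
\end{itemize}

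So the honest outcome of your plan is a refutation of (2) and (3), not a proof. What does hold, by the same EGF method you use, is:
\begin{itemize}
\item $T_n^{(k)}(1)=\mathcal{MB}_{n,k}+k!{n\brace k}$;
\item expanding $\bigl(e^{e^x-1}-1\bigr)\bigl(\frac{1}{2-e^x}-1\bigr)$ term by term gives $\mathcal{TB}_n=\frac{1}{2e}\sum_{r,\ell\ge0}\frac{(r+\ell)^n}{r!\,2^{\ell}}-B_n-\F_n+\delta_{n,0}$. This agrees with $0,0,2,15,108$ for $n\le 4$.
\end{itemize}
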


\begin{proof}
\textbf{(1).}
The exponential generating function for the general mixed Bell numbers is given by
\begin{equation}
    \sum_{n=0}^{\infty}\mathcal{GB}_n^{(k)}\frac{x^n}{n!} = \exp\bigl(k(e^x-1)\bigr).
    \label{eq-gb-egf}
\end{equation}
Recall that the classical Touchard polynomial $T_n(y) = \sum_{j=0}^{n}{n \brace j}y^j$ satisfies the bivariate generating function $\sum_{n=0}^{\infty}T_n(y)\frac{x^n}{n!} = \exp(y(e^x-1))$. Setting $y=k$ in comparison with \eqref{eq-gb-egf} shows that $\mathcal{GB}_n^{(k)} = T_n(k)$. The classical Dobiński formula for Touchard polynomials asserts that
\begin{equation}
    T_n(y) = e^{-y}\sum_{r=0}^{\infty}\frac{r^n y^r}{r!}.
    \label{eq-touchard-dobinski}
\end{equation}
Evaluating \eqref{eq-touchard-dobinski} at $y=k$ directly yields the sum in \eqref{eq-dobinski-GB}. Furthermore, for a Poisson random variable $X_k \sim \operatorname{Poisson}(k)$, its probability mass function is
\begin{equation}
    \mathbb{P}(X_k = r) = \frac{e^{-k}k^r}{r!}.
    \label{eq-poisson-pmf}
\end{equation}
Taking the $n$-th moment $\mathbb{E}[X_k^n] = \sum_{r=0}^{\infty} r^n \mathbb{P}(X_k = r)$ and substituting \eqref{eq-poisson-pmf} establishes the moment expectation equality in \eqref{eq-dobinski-GB}.

\medskip
\textbf{(2).}
By definition, the order-$k$ mixed Bell numbers expand in terms of Stirling numbers of the second kind as
\begin{equation}
    \mathcal{MB}_{n,k} = k!\sum_{j=k}^{n}\binom{j}{k}{n \brace j}.
    \label{eq-mb-sum-def}
\end{equation}
Differentiating the Touchard polynomial $T_n(y) = \sum_{j=0}^{n}{n \brace j}y^j$ $k$ times with respect to $y$ gives
\begin{equation}
    \frac{d^k}{dy^k}T_n(y) = \sum_{j=k}^{n}(j)_k {n \brace j}y^{j-k},
    \label{eq-touchard-diff}
\end{equation}
where $(j)_k = j(j-1)\cdots(j-k+1) = k!\binom{j}{k}$ denotes the falling factorial. Evaluating \eqref{eq-touchard-diff} at $y=1$ and comparing with \eqref{eq-mb-sum-def} establishes the first equality in \eqref{eq-MB-finite-difference}. Next, by Taylor's formula for polynomials about $y=1$ via the forward difference operator $\Delta^k P(0) = \sum_{j=0}^{k}(-1)^{k-j}\binom{k}{j}P(j)$, the $k$-th derivative of any polynomial $P(y)$ at $1$ translates into the finite difference
\begin{equation}
    \left.\frac{d^k}{dy^k}P(y)\right|_{y=1} = \sum_{j=0}^{k}(-1)^{k-j}\binom{k}{j}P(j).
    \label{eq-finite-diff-op}
\end{equation}
Applying \eqref{eq-finite-diff-op} with $P(y) = T_n(y)$ completes the proof of \eqref{eq-MB-finite-difference}. Finally, substituting the classical Dobiński expansion \eqref{eq-touchard-dobinski} evaluated at $y=j$ directly into \eqref{eq-MB-finite-difference} yields the double-series formula in \eqref{eq-MB-Dobinski-double}.

\medskip
\textbf{(3).}
Consider a standard Poisson random variable $X \sim \operatorname{Poisson}(1)$. Its moment generating function satisfies $\mathbb{E}[z^X] = e^{z-1}$ for any $z > 0$. Evaluating the expected value $\mathbb{E}[X^n(2^X-1)]$ yields
\begin{equation}
    \mathbb{E}\left[X^n(2^X-1)\right] = \sum_{r=0}^{\infty} r^n (2^r-1) \mathbb{P}(X=r) = \frac{1}{e}\sum_{r=0}^{\infty}\frac{r^n}{r!}(2^r-1),
    \label{eq-tb-exp-sum}
\end{equation}
which provides the middle term of \eqref{eq-TB-Poisson}. To verify that this expectation matches $\mathcal{TB}_n$, we examine its exponential generating function
\begin{equation}
    \sum_{n=0}^{\infty}\mathbb{E}\left[X^n(2^X-1)\right]\frac{x^n}{n!} = \mathbb{E}\left[e^{xX}(2^X-1)\right] = \mathbb{E}\left[(2e^x)^X\right] - \mathbb{E}\left[(e^x)^X\right].
    \label{eq-tb-egf-step1}
\end{equation}
Applying the moment generating function identity $\mathbb{E}[z^X] = e^{z-1}$ with $z=2e^x$ and $z=e^x$ to \eqref{eq-tb-egf-step1} gives
\begin{equation}
    \mathbb{E}\left[e^{xX}(2^X-1)\right] = e^{2e^x-1} - e^{e^x-1} = e^{e^x-1}\left(e^{e^x-1}-1\right).
    \label{eq-tb-egf-step2}
\end{equation}
The expression in \eqref{eq-tb-egf-step2} is precisely the exponential generating function of the total mixed Bell numbers $\mathcal{TB}_n$, which establishes \eqref{eq-TB-Poisson}.

\medskip
\textbf{(4).}
The mixed Fubini numbers satisfy the binomial convolution identity
\begin{equation}
    \mathcal{MF}_{n,m} = \sum_{j=m}^{n}\binom{n}{j}{j \brace m}\F_{n-j}.
    \label{eq-mf-conv-def}
\end{equation}
Recall that the exponential generating function of the classical Fubini numbers $\F_s$ is
\begin{equation}
    \sum_{s=0}^{\infty}\F_s \frac{x^s}{s!} = \frac{1}{2-e^x}.
    \label{eq-fubini-egf}
\end{equation}
Expanding \eqref{eq-fubini-egf} via a geometric series yields
\begin{equation}
    \frac{1}{2-e^x} = \frac{1}{2}\cdot\frac{1}{1-e^x/2} = \frac{1}{2}\sum_{\ell=0}^{\infty}\left(\frac{e^x}{2}\right)^\ell = \frac{1}{2}\sum_{\ell=0}^{\infty}\frac{1}{2^\ell}\sum_{s=0}^{\infty}\frac{(\ell x)^s}{s!}.
    \label{eq-fubini-geom}
\end{equation}
Equating coefficients of $x^s/s!$ in \eqref{eq-fubini-geom} gives the Dobiński-type representation for classical Fubini numbers
\begin{equation}
    \F_s = \frac{1}{2}\sum_{\ell=0}^{\infty}\frac{\ell^s}{2^\ell}.
    \label{eq-fubini-dobinski-rep}
\end{equation}
Substituting \eqref{eq-fubini-dobinski-rep} with $s=n-j$ directly into \eqref{eq-mf-conv-def} establishes the double-series representation in \eqref{eq-MF-Dobinski-double}.
\end{proof}
%
%
\begin{theorem}
\label{thm:master_mixed_egf}
Let $\mathcal{S}(n;(m,1^k))$ denote the mixed Stirling number of the second kind counting partitions of an $n$-element set into $m$ unlabeled blocks and $k$ distinguished labeled blocks. Setting $E(z) = e^z - 1$, the fundamental exponential generating function (EGF) is
\begin{equation}
    \sum_{n \ge m+k} \mathcal{S}(n;(m,1^k)) \frac{z^n}{n!} = \frac{E(z)^{m+k}}{m!}.
    \label{eq:master_mixed_stirling_egf}
\end{equation}
Consequently, the associated mixed polynomial families admit the following exponential generating functions.
\begin{enumerate}
    \item  For $\mathcal{MB}_n(y) = \sum_{j=0}^{n}{n \brace j}y^j$,
    \begin{equation}
        \mathcal{G}_{MB}(z,y) = \sum_{n \ge 0}\mathcal{MB}_n(y)\frac{z^n}{n!} = \exp\bigl(y E(z)\bigr).
        \label{eq:master_MB_egf}
    \end{equation}

    \item  For $\mathcal{MB}_{n,k}(y) = \sum_{m \ge 0} y^m \mathcal{S}(n;(m,1^k))$,
    \begin{equation}
        \mathcal{G}_{MB,k}(z,y) = \sum_{n \ge 0}\mathcal{MB}_{n,k}(y)\frac{z^n}{n!} = E(z)^k \exp\bigl(y E(z)\bigr).
        \label{eq:master_MB_order_egf}
    \end{equation}
    In particular, $\mathcal{MB}_{n,k}(1) = \sum_{j=k}^{n}\binom{j}{k}k!{n \brace j}$, which relates to the unextended order-$k$ mixed Bell numbers via $\mathcal{MB}_{n,k}^{\mathrm{old}} = \mathcal{MB}_{n,k}(1) - k!{n \brace k}$ for $k \ge 1$.

    \item For $\mathcal{TB}_n(y) = \sum_{m \ge 1}\sum_{k \ge 1} y^k \mathcal{S}(n;(m,1^k))$,
    \begin{equation}
        \mathcal{G}_{TB}(z,y) = \sum_{n \ge 2}\mathcal{TB}_n(y)\frac{z^n}{n!} = \left(e^{E(z)}-1\right)\frac{y E(z)}{1 - y E(z)}.
        \label{eq:master_TB_egf}
    \end{equation}

    \item For $\mathcal{MF}_{n,m}(x) = \sum_{k \ge 0} k! x^k \mathcal{S}(n;(m,1^k))$,
    \begin{equation}
        \mathcal{G}_{MF,m}(z,x) = \sum_{n \ge 0}\mathcal{MF}_{n,m}(x)\frac{z^n}{n!} = \frac{E(z)^m}{m!\left(1 - x E(z)\right)}.
        \label{eq:master_MF_egf}
    \end{equation}
\end{enumerate}
\end{theorem}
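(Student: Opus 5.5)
The plan is to derive everything from the symbolic method, as in the proof of Theorem~\ref{thm:MF-EGF-symbolic}, together with the closed form $\mathcal{S}(n;(m,1^k))=\frac{(m+k)!}{m!}{n\brace m+k}$ used before Definition~\ref{def:mixed-fubini}.

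\textbf{Master identity.} Substituting the closed form gives
\[
\sum_{n}\mathcal{S}(n;(m,1^k))\frac{z^n}{n!}=\frac{(m+k)!}{m!}\sum_n{n\brace m+k}\frac{z^n}{n!}=\frac{(m+k)!}{m!}\cdot\frac{E(z)^{m+k}}{(m+k)!}=\frac{E(z)^{m+k}}{m!},
\]
using the classical EGF of the column ${n\brace j}$. Combinatorially this is the labeled product $\mathrm{SET}_m(\mathcal A)\star\mathrm{SEQ}_k(\mathcal A)$ with $\mathcal A=\mathrm{SET}_{\ge1}(Z)$. The sum may start at $n\ge m+k$ because ${n\brace m+k}=0$ below that.

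\textbf{Items (1), (2), (4).} Each is obtained by summing the master identity against the stated weight and interchanging the sums, which is legitimate as formal power series since each $z^n$ coefficient is a finite sum.
\begin{itemize}
\item[(1)] This is the Touchard bivariate EGF already used in Theorem~\ref{thm-unified-dobinski}: $\sum_j y^jE^j/j!=e^{yE}$.
\item[(2)] $\sum_{m\ge0}y^mE^{m+k}/m!=E^ke^{yE}$. At $y=1$, extracting coefficients of $E^ke^{E}=\sum_j\binom{j}{k}k!\,E^j/j!$ gives $\sum_j\binom jk k!{n\brace j}$. Dropping the $m=0$ term, i.e.\ subtracting $E^k\leftrightarrow k!{n\brace k}$, recovers \eqref{eq:MB-order-egf}, which yields the stated relation with the old numbers.
\item[(4)] $\sum_{k\ge0}k!x^kE^{m+k}/m!=\frac{E^m}{m!}\sum_k k!\,(xE)^k$.
\end{itemize}

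\textbf{Expected obstacle.} Item (4) is the real difficulty. The sum $\sum_k k!(xE)^k$ is not the geometric series that \eqref{eq:master_MF_egf} requires. Either the definition intends the factor $k!$ to cancel the labeling already built into $\mathcal{S}$, or the normalization should be $x^k\,\mathcal{S}(n;(m,1^k))$ without $k!$. With that normalization we get $\frac{E^m}{m!}\sum_k(xE)^k=\frac{E^m}{m!(1-xE)}$, and $x=1$ matches Theorem~\ref{thm:MF-EGF-symbolic}. I would adopt this reading, justify it by the consistency check at $x=1$, and flag the discrepancy in a remark.

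\textbf{Item (3).} A literal summation over $m\ge1$, $k\ge1$ gives
\[
\sum_{m\ge1}\sum_{k\ge1}y^k\frac{E^{m+k}}{m!}=\bigl(e^{E}-1\bigr)\frac{yE}{1-yE}.
\]
The hardest part is this normalization bookkeeping. Taking the weight literally would insert a $k!$ that is absent here, and this mismatch must be reconciled with the intended meaning of ``labeled'' (sequence versus set) for the $k$ unit cells. The sanity check is $y=1$: the result should reduce to \eqref{eq:TMB-egf}, since $E/(1-E)=(e^x-1)/(2-e^x)$.
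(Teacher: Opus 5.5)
Your route for the master identity and items (1)--(3) is essentially the paper's. The paper obtains $E(z)^{m+k}/m!$ as the labeled product of $\mathrm{SET}_m$ of blocks with $k$ distinguished blocks contributing $E(z)^k$, then sums against the weights exactly as you do. Your derivation via $\mathcal S(n;(m,1^k))=\frac{(m+k)!}{m!}{n\brace m+k}$ and the column EGF is an equivalent shortcut. You also verify the $y=1$ specialization of (2) and its relation to $\mathcal{MB}^{\mathrm{old}}_{n,k}$, which the paper only asserts. Your worry about item (3) is unfounded. Its weight is $y^k$ with no factorial, so the literal summation you wrote already is the complete proof, and the $y=1$ check against \eqref{eq:TMB-egf} confirms it. The ``$k!$ mismatch'' arises only if you import the convention the paper uses for (4); it is not a problem of (3) itself.

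On item (4) you have found a genuine error in the statement, and the paper's proof does not get around it. The paper reaches the geometric series only by treating the $k$ distinguished blocks as an unordered set with EGF $E(z)^k/k!$ and letting $k!$ order them. In effect it silently replaces $\mathcal S(n;(m,1^k))$ by $\mathcal S(n;(m,1^k))/k!$, which contradicts the master identity it has just proved, where that factor is $E(z)^k$. With the definition as stated, the coefficient of $x^k z^n/n!$ on the left of \eqref{eq:master_MF_egf} is $k!\,\mathcal S(n;(m,1^k))$, while on the right it is $\mathcal S(n;(m,1^k))$. For instance, $n=3$, $m=1$, $x=1$ gives $1+6+12=19$ on the left against $\MF_{3,1}=13$ on the right. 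So do not merely flag the discrepancy in a remark. Record this counterexample as a disproof of (4) as written, and then prove (4) for the weight $x^k\,\mathcal S(n;(m,1^k))$. That normalization is the one consistent with Definition~\ref{def:mixed-fubini}, which sums $\mathcal S(n;(m,1^k))$ with no factorial, and with Theorem~\ref{thm:MF-EGF-symbolic}, as your $x=1$ check shows.
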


\begin{proof}
Under the symbolic method for labeled structures, a non-empty block has EGF $E(z) = e^z - 1$.

\textbf{(1) Fundamental EGF \eqref{eq:master_mixed_stirling_egf}.} The $m$ unlabeled blocks form a set $\operatorname{SET}_m(\mathcal{A})$ with EGF $E(z)^m/m!$, while the $k$ distinguished blocks contribute $E(z)^k$. By the labeled product rule, their joint generating function is $E(z)^{m+k}/m!$.

\textbf{(2) Mixed Bell EGFs \eqref{eq:master_MB_egf}--\eqref{eq:master_MB_order_egf}.} Weighting each block in $\operatorname{SET}(\mathcal{A})$ by $y$ yields $\exp(y E(z))$ for \eqref{eq:master_MB_egf}. Multiplying by $E(z)^k$ for the $k$ distinguished blocks yields \eqref{eq:master_MB_order_egf}.

\textbf{(3) Total Mixed Bell EGF \eqref{eq:master_TB_egf}.} Requiring $m \ge 1$ unlabeled blocks gives EGF $e^{E(z)} - 1$. Ordering $k \ge 1$ distinguished blocks with weight $y$ gives the sequence EGF $\sum_{k \ge 1}(y E(z))^k = \frac{y E(z)}{1 - y E(z)}$. Their product establishes \eqref{eq:master_TB_egf}.

\textbf{(4) Mixed Fubini EGF \eqref{eq:master_MF_egf}.} Linearly ordering $k \ge 0$ distinguished blocks with weight $x$ yields $\sum_{k \ge 0} k! x^k \frac{E(z)^k}{k!} = \frac{1}{1 - x E(z)}$. Product with the $m$ unlabeled blocks $E(z)^m/m!$ completes the proof.
\end{proof}
The classical Touchard congruence, $B_{n+p} \equiv B_{n+1} + B_n \pmod{p}$, describes the Frobenius-type periodicity of the Bell numbers modulo a prime $p$. In the following, we generalize this result to the mixed combinatorial sequences using the structural properties of their exponential generating functions.

\begin{theorem}\label{thm:3.7}
Let $p$ be a prime. Then for every $n\ge0$, the following congruences hold modulo $p$.
\begin{enumerate}
\item \textbf{General mixed Bell polynomials.} $GB^{(x)}_{n+p}\equiv GB^{(x)}_{n+1}+x^pGB^{(x)}_n\pmod p$; in particular, for $a\in\mathbb{Z}$, $GB^{(a)}_{n+p}\equiv GB^{(a)}_{n+1}+aGB^{(a)}_n\pmod p$.
\item \textbf{Mixed Bell numbers of order $k$.} For $0\le k<p$, $\widehat{MB}_{n+p,k}\equiv\widehat{MB}_{n+1,k}+\widehat{MB}_{n,k}\pmod p$. Under the standard convention $MB_{n,k}=\widehat{MB}_{n,k}-k!{n\brace k}$, this yields $MB_{n+p,k}\equiv MB_{n+1,k}+MB_{n,k}+k!{n\brace k}\pmod p$.
\item \textbf{Total mixed Bell numbers.} For $n\ge1$, $T\!B_{n+p}\equiv T\!B_{n+1}+T\!B_n+\F_n\pmod p$; for $n=0$, $T\!B_p\equiv T\!B_1+T\!B_0+\F_0-1\pmod p$.
\item \textbf{Mixed Fubini numbers.} For every odd prime $p$ and fixed $m\ge0$, $\MF_{n+p,m}\equiv \MF_{n+1,m}\pmod p$.
\end{enumerate}
\end{theorem}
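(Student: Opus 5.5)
The plan is to use a single device for all four parts: a Dobiński-type linear functional, together with Fermat's little theorem applied to polynomials in an auxiliary variable $r$. For a weight $x$, define the linear functional $L_x$ on $\mathbb{Z}[x][r]$ by $L_x\bigl((r)_j\bigr)=x^j$, where $(r)_j=r(r-1)\cdots(r-j+1)$. Since $r^n=\sum_j{n\brace j}(r)_j$, this gives $L_x(r^n)=T_n(x)=\GB^{(x)}_n$, in the spirit of Theorem~\ref{thm-unified-dobinski}(1). Two facts drive everything:
\begin{itemize}
\item $(r)_p\,(r-p)_j=(r)_{p+j}$, which gives $L_x\bigl((r)_p\,g(r-p)\bigr)=x^pL_x(g(r))$ for every polynomial $g$;
\item coefficientwise modulo $p$ we have $(r)_p\equiv r^p-r$, and $g(r-p)\equiv g(r)$ for every $g\in\mathbb{Z}[x][r]$.
\end{itemize}

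For (1), apply these facts with $g(r)=r^n$. This gives $L_x\bigl((r^p-r)r^n\bigr)\equiv L_x\bigl((r)_p(r-p)^n\bigr)=x^pT_n(x)$, that is, $\GB^{(x)}_{n+p}-\GB^{(x)}_{n+1}\equiv x^p\GB^{(x)}_n$. Specializing $x=a\in\mathbb{Z}$ and using $a^p\equiv a$ gives the integer version.

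For (2), Theorem~\ref{thm-unified-dobinski}(2) shows $\widehat{\MB}_{n,k}=\sum_j (j)_k{n\brace j}=L_1\bigl((r)_k r^n\bigr)$. I repeat the argument with $g(r)=(r)_k r^n$, using $(r-p)_k\equiv(r)_k$. This gives $\widehat{\MB}_{n+p,k}\equiv\widehat{\MB}_{n+1,k}+\widehat{\MB}_{n,k}$; the hypothesis $k<p$ is not even needed here. For the unhatted form, write $\MB=\widehat{\MB}-k!{n\brace k}$. It then suffices to show $k!{n+p\brace k}\equiv k!{n+1\brace k}$. This follows from $k!{n\brace k}=\sum_{i}(-1)^{k-i}\binom ki i^n$ and $i^{n+p}\equiv i^{n+1}$, which also holds for $i=0$. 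The leftover term $k!{n\brace k}$ is exactly the stated correction.

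For (3), I would start from the Poisson representation in Theorem~\ref{thm-unified-dobinski}(3). That representation gives the EGF $e^{2E}-e^{E}$, hence $\TB_n=T_n(2)-B_n$. Applying (1) at $x=2$ and $x=1$ yields $\TB_{n+p}\equiv\TB_{n+1}+\TB_n+T_n(2)$. The main obstacle is reconciling this correction term with the stated $\F_n$, and the boundary case $n=0$. The paper uses two different EGFs for $\TB$: \eqref{eq:TMB-egf} and \eqref{eq-tb-egf-step2}. So I would first fix which one is meant. If it is \eqref{eq:TMB-egf}, I would instead write $\TB_n$ through \eqref{eq:TB-stirling-form}, namely as $\sum_r{n\brace r}W_r$ with $W_r=\sum_{k=1}^{r-1}\binom rk k!$. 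Then I would use $k!\equiv0$ for $k\ge p$ to reduce mod $p$ to finitely many terms, each a linear combination of $i^n$, and read off the correction term directly. I would check the result numerically against Table~\ref{tab:TB-values} for $p=3,5$ before trusting either form.

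For (4), write $\MF_{n,m}=\sum_j\frac{j!}{m!}{n\brace j}$. When $j\ge m+p$, the product $(m+1)\cdots j$ contains $p$ consecutive integers, so it is $\equiv0$. The sum therefore reduces modulo $p$ to finitely many $j<m+p$. For $m<p$ I use $\frac{j!}{m!}{n\brace j}=\frac1{m!}\sum_i(-1)^{j-i}\binom ji i^n$; since $m!$ is invertible mod $p$, Fermat again gives $\MF_{n+p,m}\equiv\MF_{n+1,m}$. The delicate case is $m\ge p$, where $m!$ is not invertible. There I would fall back on Corollary~\ref{cor:MF-binomial} and try to prove the congruence by induction on $m$ via the recurrence of Theorem~\ref{thm:fubini_system}(3). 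I expect this step, together with the normalization issue in (3), to be where the real care is required.
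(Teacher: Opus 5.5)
\textbf{Part (1).} This is correct, and it takes a different route from the paper. The paper proves ${n+p\brace j}\equiv{n+1\brace j}+{n\brace j-p}$ by induction. Your functional $L_x$, together with $(r)_p(r-p)_j=(r)_{p+j}$ and $(r)_p\equiv r^p-r$, gives the polynomial congruence directly.

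\textbf{Part (2).} This rests on a false identity: $\widehat{\MB}_{n,k}\ne L_1\bigl((r)_k r^n\bigr)$. The same shift identity, with $k$ in place of $p$, gives $L_1\bigl((r)_k r^n\bigr)=L_1\bigl((r+k)^n\bigr)=\sum_l\binom nl k^{n-l}B_l$. Its EGF is $e^{kx}e^{e^x-1}$, not $(e^x-1)^ke^{e^x-1}$. Already for $k=1$, $n=0$ you have $L_1(r)=1$ while $\widehat{\MB}_{0,1}=0$. So the congruence you derive is for a different sequence.

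Within your framework, the fix is to use the functional $M_k$ defined by $M_k\bigl((r)_j\bigr)=(j)_k$, i.e.\ $\partial_x^kL_x$ at $x=1$. Then $M_k(r^n)=\widehat{\MB}_{n,k}$. For $g=\sum_ja_j(r)_j$ you get $M_k\bigl((r)_pg(r-p)\bigr)=\sum_ja_j(j+p)_k\equiv M_k(g)$. Your two facts then give (2) for every $k$. Alternatively, differentiate (1) $k$ times at $x=1$, as the paper does.

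\textbf{Part (3).} This is left unproved. You rightly notice that Theorem~\ref{thm-unified-dobinski}(3) clashes with \eqref{eq:TMB-egf}. The stated $\F_n$ and Table~\ref{tab:TB-values} refer to the latter. But you never identify the correction term. The missing step is short:
\begin{itemize}
\item $\MB_{n,k}=k!\sum_j\binom nj{j\brace k}B_{n-j}$ is divisible by $k!$, so $\TB_n\equiv\sum_{k=1}^{p-1}\MB_{n,k}\pmod p$.
\item Summing the unhatted form of (2) over $1\le k\le p-1$ gives $\TB_{n+p}\equiv\TB_{n+1}+\TB_n+\sum_{k=1}^{p-1}k!{n\brace k}$.
\item Finally, $\sum_{k=1}^{p-1}k!{n\brace k}\equiv\F_n-\delta_{n,0}$.
\end{itemize}
This yields both cases of (3) at once, for every prime including $p=2$. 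The paper instead writes the EGF as $(e^{E}-1)H$ with $E=e^x-1$ and $H=\frac{1}{2-e^x}-1$. It applies $D^p$ via Leibniz modulo $p$, then uses Touchard for $e^{E}$ together with $D^pH\equiv DH$. It justifies the last step only for odd $p$.

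\textbf{Part (4).} Your truncation-plus-Fermat argument for $m<p$ is correct. It is a finite version of the paper's expansion into exponentials $e^{rx}$ with $p$-integral coefficients.

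The induction you plan for $m\ge p$ cannot succeed, because the congruence is false there. We have $\MF_{m,m}=1$ while $\MF_{m-p+1,m}=0$, so $n=m-p$ is a counterexample for every $m\ge p$. For example, $p=3$ gives $\MF_{3,3}=1\not\equiv0=\MF_{1,3}$. The paper itself records $\MF_{p,p}=1$ in Theorem~\ref{thm:prime-congruences}.

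The paper's proof of (iv) hides the same restriction. The expansion $\frac1{m!}\sum_i(-1)^{m-i}\binom mi e^{ix}$ of $(e^x-1)^m/m!$ has non-$p$-integral coefficients as soon as $p\mid m!$. So (4) holds, and should be claimed, only for $0\le m\le p-1$. Testing $n=0$, $m=p$ would have shown this before you set up the induction.
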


\begin{proof}
We begin with the foundational Stirling congruence ${n+p\brace j}\equiv{n+1\brace j}+{n\brace j-p}\pmod p$ (with ${n\brace r}=0$ for $r<0$), established by induction on $n$ from the base case ${p\brace 1}\equiv{p\brace p}\equiv1$, ${p\brace j}\equiv0$ for $1<j<p$, using the recurrence ${r+1\brace j}={r\brace j-1}+j{r\brace j}$.

(i) $GB^{(x)}_n=\sum_j{n\brace j}x^j$; applying the foundational congruence gives $GB^{(x)}_{n+p}\equiv GB^{(x)}_{n+1}+x^pGB^{(x)}_n\pmod p$; specializing $x=a\in\mathbb{Z}$ and applying Fermat's little theorem gives the integer version.

(ii) For $0\le k<p$, $\widehat{MB}_{n,k}=\sum_j(j)_k{n\brace j}=\frac{d^k}{dx^k}GB^{(x)}_n\big|_{x=1}$. Differentiating the congruence in (i) $k$ times ($k<p$), all positive-order derivatives of $x^p$ vanish modulo $p$, giving the first congruence. Transitioning to $\mathcal{MB}_{n,k}=\widehat{MB}_{n,k}-k!{n\brace k}$ and using ${n+p\brace k}\equiv{n+1\brace k}\pmod p$ for $k<p$ isolates the forcing term $k!{n\brace k}$.

(iii) With $u=e^x-1$, the EGF is $T\!B(x)=H(x)(e^u-1)$ where $H(x)=\frac{e^x-1}{2-e^x}=\sum_{r\ge0}\frac{e^{rx}}{2^{r+1}}$. For odd $p$, $r^p\equiv r\pmod p$ implies $D^pH\equiv DH\pmod p$. Applying $D^p$ to $T\!B(x)$ via the Leibniz rule yields $D^pT\!B(x)\equiv DT\!B(x)+H(x)e^{px}e^u\pmod p$; extracting coefficients incorporates the Fubini forcing term $\F_n$ for $n\ge1$, with the $n=0$ case handled separately.

(iv) The EGF $\MF_m(x)=\frac{(e^x-1)^m}{m!(2-e^x)}$ is a linear combination of exponentials $e^{rx}$ with $p$-adically integral coefficients; since $r^p\equiv r\pmod p$, $D^p\MF_m(x)\equiv DMF_m(x)\pmod p$, and coefficient extraction directly yields the pure shift.
\end{proof}
\begin{theorem}
\label{thm:prime-congruences}
Let $p$ be a prime. The mixed-type sequences satisfy:
\begin{enumerate}
\item \textbf{General mixed Bell polynomials.} For $k\in\mathbb{Z}$, $\displaystyle GB^{(k)}_p\equiv k+k^p\equiv2k\pmod p$.
\item \textbf{Mixed Bell numbers of order $k$.} For $1\le k\le p-1$, $\widehat{MB}_{p,k}\equiv\delta_{k,1}\pmod p$, where $\widehat{MB}_{n,k}=\sum_{m\ge0}S(n;(m,1^k))$ is the unrestricted ($m\ge0$) version of $MB_{n,k}$. (At the boundary, $\widehat{MB}_{p,0}=B_p\equiv2\pmod p$ by the classical Touchard congruence, and $\widehat{MB}_{p,p}=p!\equiv0\pmod p$.)
\item \textbf{Total mixed Bell numbers.} $T\!B_p\equiv0\pmod p$.
\item \textbf{Mixed Fubini numbers.} For $1\le m\le p-1$, $MF_{p,m}\equiv\delta_{m,1}\pmod p$. (At $m=p$, $MF_{p,p}=1$.)
\end{enumerate}
\end{theorem}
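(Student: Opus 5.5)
The whole statement reduces to the classical fact that the row ${p\brace j}$ collapses modulo $p$:
\[
{p\brace 1}={p\brace p}=1,\qquad {p\brace j}\equiv 0\pmod p\quad(1<j<p).
\]
I would prove this first. For $1\le j<p$, use the explicit formula ${p\brace j}=\frac{1}{j!}\sum_{i=0}^{j}(-1)^{j-i}\binom{j}{i}i^p$. Since $j!$ is invertible modulo $p$, this quotient can be reduced modulo $p$ inside $\mathbb{Z}_{(p)}$. Fermat's little theorem gives $i^p\equiv i$, so ${p\brace j}\equiv \frac{1}{j!}\sum_i(-1)^{j-i}\binom{j}{i}i={1\brace j}=\delta_{j,1}$. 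Combined with ${p\brace p}=1$, this is the entire arithmetic input. The only delicate point is this reduction step, which is why I flag it as the main obstacle.

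Each part is then a finite sum over $j$ in which only the terms $j=1$ and $j=p$ can survive.

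\textbf{(1).} From $\mathcal{GB}^{(k)}_p=\sum_j{p\brace j}k^j$ we get $\mathcal{GB}^{(k)}_p\equiv k+k^p$, and Fermat gives $k^p\equiv k$, hence $2k$.

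\textbf{(2).} I would use the representation from the proof of Theorem~\ref{thm-unified-dobinski}(2), $\widehat{MB}_{p,k}=\sum_j (j)_k{p\brace j}$. This gives $\widehat{MB}_{p,k}\equiv (1)_k+(p)_k$. For $k\ge1$ we have $(1)_k=\delta_{k,1}$, and $(p)_k=p(p-1)\cdots(p-k+1)\equiv0$. The boundary remarks follow the same way: $B_p\equiv1+1$, and $(p)_p=p!$.

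\textbf{(3).} Rather than use the EGF, I would use the convolution \eqref{Dnk}, $\mathcal{TB}_p=\sum_{j=1}^{p-1}\binom{p}{j}\F_jB_{p-j}$. Every coefficient $\binom{p}{j}$ with $1\le j\le p-1$ is divisible by $p$, so the sum vanishes modulo $p$. As a consistency check, \eqref{eq:TB-stirling-form} gives the same answer: it yields $\mathcal{TB}_p\equiv W_p=\sum_{k=1}^{p-1}\binom{p}{k}k!\equiv0$.

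\textbf{(4).} Use $\MF_{p,m}=\sum_{j=m}^{p}\frac{j!}{m!}{p\brace j}$. For $1\le m\le p-1$, the middle terms $1<j<p$ vanish modulo $p$. The term $j=p$ equals $p!/m!$, which is divisible by $p$ because $m<p$. The term $j=1$ is present only when $m=1$, and then it contributes $1$. Hence $\MF_{p,m}\equiv\delta_{m,1}$. For $m=p$ the sum has the single term $j=p$, equal to $1$.
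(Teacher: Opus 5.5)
Your proof is correct. Parts (1) and (4) follow the paper's argument: everything reduces to ${p\brace j}\equiv\delta_{j,1}+\delta_{j,p}\pmod p$, and in (4) the $j=p$ term $p!/m!$ vanishes. The paper only quotes the row congruence as classical, whereas you derive it from the explicit formula and Fermat. You diverge from the paper in (2) and (3).

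\textbf{Part (2).} The paper expands $\widehat{MB}_{p,k}$ as the Bell convolution $k!\sum_j\binom pj{j\brace k}B_{p-j}$ (it omits the $k!$). Then $p\mid\binom pj$ leaves only $k!{p\brace k}$, so a single Stirling value is needed. You instead use $\sum_j(j)_k{p\brace j}$ with the whole row. That costs nothing extra after (1) and handles the boundary cases $k=0$ and $k=p$ on the same line. Justify that identity by summing $\mathcal S(p;(m,1^k))=\frac{(m+k)!}{m!}{p\brace m+k}$ over $m\ge0$, as in the proof of Theorem~\ref{thm:3.7}(ii). Do not cite Theorem~\ref{thm-unified-dobinski}(2) for it: the paper states it there for $\MB_{n,k}$, where it is false (for $n=2$, $k=1$ it gives $3$, not $2$).

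\textbf{Part (3).} The paper deduces $\TB_p\equiv0$ as the $n=0$ case of the shift congruence in Theorem~\ref{thm:3.7}(iii). That proof is only a sketch, written for odd $p$, with the $n=0$ case ``handled separately''. Your argument through $\TB_p=\sum_{j=1}^{p-1}\binom pj\F_jB_{p-j}$ uses only $p\mid\binom pj$, is self-contained, and also covers $p=2$. The paper's route places (3) inside a full Touchard-type recurrence, but for this statement yours is the more reliable proof.
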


\begin{proof}
All assertions follow from the classical prime congruence for Stirling numbers of the second kind, ${p\brace j}\equiv\delta_{j,1}+\delta_{j,p}\pmod p$.

(1) Substituting into $GB^{(k)}_p=\sum_j{p\brace j}k^j$ gives $GB^{(k)}_p\equiv k+k^p\pmod p$; Fermat's little theorem gives $2k\pmod p$.

(2) $\widehat{MB}_{p,k}=\sum_{j=k}^p\binom{p}{j}{j\brace k}B_{p-j}$ (via the $m\ge0$ analogue of (10)); for $1\le k\le p-1$ the dominant surviving terms modulo $p$ isolate $\delta_{k,1}$.

(3) The $n=0$ case of Theorem \ref{thm:3.7}(iii) gives $T\!B_p\equiv T\!B_1+T\!B_0+F_0-1\equiv0\pmod p$ directly.

(4) For $MF_{p,m}$, terms involving ordered-block factorials vanish modulo $p$ for indices $\ge p$; combined with the Stirling congruence, non-zero terms survive only when $m=1$ (for $1\le m\le p-1$), yielding $MF_{p,1}\equiv{p\brace 1}=1\pmod p$.
\end{proof}
\begin{theorem}
\label{thm:mixed_p2_congruences}
Let $p\ge5$ be prime, $q_i=\frac{i^{p-1}-1}{p}$, and for $2\le j\le p-1$ define
\[
\Lambda_j=\frac{1}{(j-1)!}\sum_{i=1}^j(-1)^{j-i}\binom{j-1}{i-1}q_i.
\]
Modulo $p^2$, the principal mixed Bell and mixed Fubini families satisfy:
\begin{enumerate}
\item \textbf{General mixed Bell polynomials} ($k\in\mathbb{Z}$): $\GB^{(k)}_p\equiv k+k^p+p\sum_{j=2}^{p-1}\Lambda_jk^j\pmod{p^2}$.
\item \textbf{Mixed Bell numbers of order $k$} ($1\le k\le p-1$): $\widehat{MB}_{p,k}\equiv k!\Big[{p\brace k}+p\sum_{j=k}^{p-1}{j\brace k}\Lambda_j\Big]\pmod{p^2}$.
\item \textbf{Total mixed Bell numbers} ($W_j=\sum_{k=1}^j{j\brace k}k!$): $\TB_p\equiv p\sum_{i=1}^{p-1}(-1)^{i-1}(i-1)!+p\sum_{j=2}^{p-1}\Lambda_jW_j\pmod{p^2}$.
\item \textbf{Mixed Fubini numbers} ($1\le r\le p-1$): $\MF_{p,r}\equiv{p\brace r}+p\sum_{j=r}^{p-1}\frac{j!}{r!(j-r)!}\Lambda_j\pmod{p^2}$.
\end{enumerate}
\end{theorem}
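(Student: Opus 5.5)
The plan rests on one exact identity: for a prime $p$ and $2\le j\le p-1$,
\[
{p\brace j}=p\,\Lambda_j .
\]
Every part of the theorem should then follow by substituting this into the Stirling expansions already established in Section 2 and Theorem \ref{thm-unified-dobinski}, and reducing the remaining boundary terms modulo $p^2$.

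To prove the identity, start from the explicit formula ${p\brace j}=\frac{1}{j!}\sum_{i=0}^{j}(-1)^{j-i}\binom{j}{i}i^p$. Write $i^p=i\cdot i^{p-1}=i(1+pq_i)$ for $1\le i\le j$. The linear part $\sum_i(-1)^{j-i}\binom{j}{i}i$ is a $j$-th finite difference of a degree-one polynomial, so it vanishes for $j\ge2$. This leaves
\[
{p\brace j}=\frac{p}{j!}\sum_{i=1}^{j}(-1)^{j-i}\binom{j}{i}\,i\,q_i .
\]
Using $\frac{i}{j}\binom{j}{i}=\binom{j-1}{i-1}$, this becomes exactly $p\Lambda_j$. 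Since $j<p$, the rational number $\Lambda_j$ is $p$-integral, so the identity makes sense $p$-adically. The same expansion recovers the mod $p$ fact ${p\brace j}\equiv\delta_{j,1}+\delta_{j,p}$ used in Theorem \ref{thm:prime-congruences}.

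Part (1) is then immediate. In $\GB^{(k)}_p=\sum_j{p\brace j}k^j$, the $j=1$ term contributes $k$, the $j=p$ term contributes $k^p$, and the middle terms contribute $p\sum_{j=2}^{p-1}\Lambda_jk^j$.

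For parts (2)--(4) the plan is the same: write each quantity as $\sum_j c_j{p\brace j}$ using the closed forms already established.
\begin{itemize}
\item For (2): $\widehat{MB}_{p,k}=\sum_j (j)_k{p\brace j}$.
\item For (3): $\TB_p=\sum_{r=2}^{p}{p\brace r}W_r$, from \eqref{eq:TB-stirling-form}.
\item For (4): $\MF_{p,r}=\sum_{j}\frac{j!}{r!}{p\brace j}$.
\end{itemize}
In each case substitute $p\Lambda_j$ for the interior indices. For the endpoint $j=p$, where ${p\brace p}=1$, reduce the coefficient with $(p)_k=p(p-1)\cdots(p-k+1)\equiv p(-1)^{k-1}(k-1)!\pmod{p^2}$. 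In (3) this endpoint reduction yields precisely $W_p\equiv p\sum_{i=1}^{p-1}(-1)^{i-1}(i-1)!$, which is the first term of the stated formula.

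The step I expect to be the main obstacle is matching the exact shape of the stated right-hand sides in (2)--(4). My direct expansion produces binomial or falling-factorial weights, such as $k!\binom{j}{k}$ or $j!/r!$, together with endpoint contributions. The statement instead displays weights ${j\brace k}$ and $\binom{j}{r}$, and defines $W_j$ via Fubini-type sums.

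To reconcile these, I would first try an alternative expansion for (2), namely $\widehat{MB}_{p,k}=\sum_j\binom{p}{j}{j\brace k}B_{p-j}$. Here $\binom{p}{j}\equiv p\frac{(-1)^{j-1}}{j}\pmod{p^2}$ for $0<j<p$, and the $j=p$ term gives ${p\brace k}$. The remaining task would be to convert $\sum$ against $B_{p-j}$ into the $\Lambda_j$-form using the first-paragraph identity applied inside $B_{p-j}$. For (3) and (4) I would likewise rewrite the weights through $\binom{r}{k}k!=\sum$ of Stirling-type terms and compare term by term. If exact agreement fails, I would verify numerically at $p=5,7$ to decide whether the endpoint terms (e.g.\ $p!/r!\bmod p^2$ in (4)) must be absorbed into the displayed sums or instead indicate a needed correction to the statement.
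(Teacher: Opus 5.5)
\textbf{Verdict: genuine gap in (2)--(4), which cannot be closed because those parts of the statement are false; (1) is proved, and (3) only with the weights of \eqref{eq:TB-stirling-form}.}

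Your master identity is right, and it is in fact exact. With $i^p=i(1+pq_i)$, the linear part has vanishing $j$-th difference, so ${p\brace j}=p\Lambda_j$ for $2\le j\le p-1$. This is exactly the paper's starting point. The paper then does what you propose: it splits off the terms $j=1$ and $j=p$ in each Stirling expansion. Part (1) follows as you say. Your argument also proves (3), but only for the weight $W_r=\sum_{k=1}^{r-1}\binom{r}{k}k!$ of \eqref{eq:TB-stirling-form}. With that weight, $\TB_p=p\sum_{j=2}^{p-1}\Lambda_jW_j+W_p$ exactly, and $W_p=\sum_{k=1}^{p-1}(p)_k\equiv p\sum_{i=1}^{p-1}(-1)^{i-1}(i-1)!\pmod{p^2}$.

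The gap is the step you flag as the main obstacle, and it cannot be closed. The displayed right-hand sides of (2) and (4) are false, and so is (3) with the statement's own $W_j=\sum_k{j\brace k}k!=\F_j$. Your direct expansion is the correct result, and your planned check at $p=5$ settles it; there $\Lambda_2=3$, $\Lambda_3=5$, $\Lambda_4=2$.

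For (2) with $k=2$: $\widehat{MB}_{5,2}=\sum_j(j)_2{5\brace j}=320\equiv20\pmod{25}$ (indeed $\MB_{5,2}+2!{5\brace 2}=290+30$). The display gives $2!\bigl[{5\brace 2}+5(\Lambda_2+3\Lambda_3+7\Lambda_4)\bigr]=350\equiv0$.

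For (4) with $r=2$: $\MF_{5,2}=270\equiv20$. The display gives ${5\brace 2}+5(\Lambda_2+3\Lambda_3+6\Lambda_4)=165\equiv15$.

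For (3): with $W_j=\F_j$ the right side is $-20+5(9+65+150)=1100\equiv0$, while $\TB_5=860\equiv10$. With the weights of \eqref{eq:TB-stirling-form} it is $-20+5(6+45+80)=635\equiv10$, as your argument predicts.

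The displays in (2) and (4) have three separate defects:
(a) they count ${p\brace k}=p\Lambda_k$ twice when $k\ge2$;
(b) they carry $k!{j\brace k}$ and $\binom{j}{r}$ where $(j)_k$ and $j!/r!$ belong;
(c) they omit the endpoint terms $(p)_k\equiv(-1)^{k-1}(k-1)!\,p$ and $p!/r!\equiv-p\,(r!)^{-1}$, which are nonzero modulo $p^2$.

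What your method actually gives is
\[
\widehat{MB}_{p,k}\equiv\delta_{k,1}+p\sum_{j=\max(k,2)}^{p-1}(j)_k\Lambda_j+(-1)^{k-1}(k-1)!\,p,\qquad
\MF_{p,r}\equiv\delta_{r,1}+p\sum_{j=\max(r,2)}^{p-1}\frac{j!}{r!}\Lambda_j-p\,(r!)^{-1}\pmod{p^2}.
\]
No rewriting via $\sum_j\binom{p}{j}{j\brace k}B_{p-j}$ can turn this into the stated forms; that identity also needs an extra factor $k!$. The paper's sketch does not resolve the mismatch either: it writes these expansions with weights $\binom{j}{k}$ and $j!(j-r)!/r!$ and simply asserts the displays. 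The statement itself must be corrected as above.
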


\begin{proof}
The master congruence ${p \brace j} \equiv p \Lambda_j \pmod{p^2}$ for $2 \le j \le p-1$ follows from substituting $i^p = i(1+pq_i)$ into the explicit Stirling formula. Together with boundary conditions ${p \brace 1} = {p \brace p} = 1$, each relation is verified by separating boundary terms:

\begin{enumerate}
\item[\rm (i)] Expanding $\mathcal{GB}_p^{(k)} = \sum_{j=0}^p {p \brace j} k^j$ and isolating $j=0, 1, p$ yields.
\item[\rm (ii)] Extracting $j=1$ and $j=p$ from $\mathcal{MB}_{p,k} = \sum_{j=k}^p \binom{j}{k} {p \brace j}$ yields.
\item[\rm (iii)] Evaluating $\mathcal{TB}_p = \sum_{j=0}^p {p \brace j} W_j$ with $W_0=0$ and terminal weight $W_p$ yields.
\item[\rm (iv)] Partitioning $\mathcal{MF}_{p,r} = \sum_{j=r}^p \frac{j!(j-r)!}{r!} {p \brace j}$ into $j=r$, interior terms, and $j=p$ yields.
\end{enumerate}
\end{proof}

In this section, we establish that the mixed partition sequences possess a Frobenius-type lifting property characteristic of $p$-adic analytic functions.  
Unlike the ordinary Bell numbers, whose higher congruence structure is governed by the Williams periodicity, the mixed sequences inherit a stabilized $p$-adic period arising from the ordered (Fubini-type) component introduced through labeled blocks.
\begin{lemma}
\label{lem:fermat_p2_lifting}
Let $p$ be a prime and $a\in\mathbb{Z}$. Then $a^{p^2}\equiv a^p\pmod{p^2}$. More generally, for every integer $n\ge0$, $a^{p^2+n}\equiv a^{p+n}\pmod{p^2}$.
\end{lemma}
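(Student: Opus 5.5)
The plan is to prove $a^{p^2}\equiv a^p\pmod{p^2}$ by splitting on whether $p$ divides $a$. The general statement then follows by multiplying both sides by $a^n$, since congruences modulo $p^2$ are preserved under multiplication by an integer. For $n=0$ this is the base statement, and for $n\ge1$ we get $a^{p^2+n}=a^n\cdot a^{p^2}\equiv a^n\cdot a^p=a^{p+n}\pmod{p^2}$.

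\emph{Case $p\mid a$.} Since $p\ge2$, both exponents $p^2$ and $p$ are at least $2$. Hence $p^2\mid a^{p^2}$ and $p^2\mid a^p$, so both sides are $\equiv0\pmod{p^2}$.

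\emph{Case $p\nmid a$.} Then $a$ is a unit modulo $p^2$, and the group $(\mathbb{Z}/p^2\mathbb{Z})^\times$ has order $\varphi(p^2)=p(p-1)$. By Euler's theorem, $a^{p(p-1)}\equiv1\pmod{p^2}$. Writing $p^2=p+p(p-1)$ gives
\[
a^{p^2}=a^{p}\cdot a^{p(p-1)}\equiv a^p\pmod{p^2}.
\]

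There is also an alternative route for this case, closer to the Fermat-quotient notation $q_i$ used in Theorem~\ref{thm:mixed_p2_congruences}. Write $a^{p-1}=1+pq_a$, so that
\[
a^{p(p-1)}=(1+pq_a)^p=1+p^2q_a+\binom{p}{2}p^2q_a^2+\cdots\equiv1\pmod{p^2},
\]
because every term after the first is divisible by $p^2$. This gives the same conclusion.

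No step here is a genuine obstacle. The only point needing care is that the case $p\mid a$ is not covered by Euler's theorem and must be handled separately. There the conclusion relies on $p\ge2$, which guarantees that $a^p$ is already divisible by $p^2$.
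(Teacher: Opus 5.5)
Your proof is correct and matches the paper's argument essentially verbatim: the case split on $p\mid a$, Euler's theorem with $\varphi(p^2)=p(p-1)$ in the unit case, then multiplication by $a^n$ for the general statement. The Fermat-quotient expansion of $(1+pq_a)^p$ is a valid alternative derivation of $a^{p(p-1)}\equiv1\pmod{p^2}$, but it is not needed.
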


\begin{proof}
If $p\mid a$, both $a^p$ and $a^{p^2}$ are divisible by $p^2$, so the congruence is immediate. If $p\nmid a$, then $\varphi(p^2)=p(p-1)$ gives $a^{p(p-1)}\equiv1\pmod{p^2}$ by Euler's theorem; multiplying by $a^p$ gives $a^{p^2}=a^p\cdot a^{p(p-1)}\equiv a^p\pmod{p^2}$. Multiplying by $a^n$ gives the general statement.
\end{proof}

\begin{theorem}
Let $p$ be a prime, $m,k\ge0$, $r=m+k$. If $r<p$, then for every $n\ge0$,
\begin{equation}
\mathcal{S}\big(p^2+n;(m,1^k)\big)\equiv S\big(p+n;(m,1^k)\big)\pmod{p^2}.
\end{equation}
\end{theorem}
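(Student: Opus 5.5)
The plan is to reduce the mixed Stirling number to a finite alternating sum of pure powers and then apply Lemma~\ref{lem:fermat_p2_lifting} term by term.

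\textbf{Step 1: reduce to a sum of powers.} Using the identity $\mathcal{S}(N;(m,1^k))=\frac{(m+k)!}{m!}{N\brace m+k}$ recorded after Definition~\ref{def:mixed-fubini}, and writing $r=m+k$, I would combine it with the classical explicit formula
\[
r!\,{N\brace r}=\sum_{i=0}^{r}(-1)^{r-i}\binom{r}{i}\,i^{N}.
\]
This gives, for every $N\ge0$,
\[
m!\,\mathcal{S}\big(N;(m,1^k)\big)=\sum_{i=0}^{r}(-1)^{r-i}\binom{r}{i}\,i^{N}.
\]

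\textbf{Step 2: compare the two power sums.} Take $N=p^2+n$ and $N=p+n$. Lemma~\ref{lem:fermat_p2_lifting} with $a=i$ gives $i^{p^2+n}\equiv i^{p+n}\pmod{p^2}$ for each $1\le i\le r$. The term $i=0$ contributes $0$ on both sides, because both exponents are at least $p\ge2$. Since $\binom{r}{i}$ and the signs are integers, summing yields
\[
m!\,\mathcal{S}\big(p^2+n;(m,1^k)\big)\equiv m!\,\mathcal{S}\big(p+n;(m,1^k)\big)\pmod{p^2}.
\]

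\textbf{Step 3: cancel $m!$.} This is the only place the hypothesis $r<p$ enters. Because $m\le r<p$, the integer $m!$ is coprime to $p$ and hence invertible modulo $p^2$. Both mixed Stirling numbers are integers, so cancelling $m!$ gives the claimed congruence.

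\textbf{Main obstacle.} I do not expect any real difficulty; the step needing the most care is keeping the argument in integer arithmetic rather than dividing by $r!$. The factor $r!$ must stay on the side of the power sum and never be divided out, which is why the identity is written with $m!$ on the left. Any $p$-divisibility of $r!/m!$ is harmless in this form.

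Once $m!$ is cleared, the congruence holds even without the hypothesis $r<p$ in the form $m!\,\mathcal{S}(p^2+n)\equiv m!\,\mathcal{S}(p+n)$. The hypothesis is needed only for this final cancellation.
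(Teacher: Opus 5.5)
Your proposal is correct and follows the paper's proof essentially step for step. The paper also writes $\mathcal{S}(N;(m,1^k))=\frac{1}{m!}\sum_{j=0}^{r}(-1)^{r-j}\binom{r}{j}j^{N}$, applies Lemma~\ref{lem:fermat_p2_lifting} to each base $j$, and uses $m<p$ to make $m!$ invertible modulo $p^2$; multiplying through by $m!$ and cancelling at the end is only a cosmetic difference, and, as you observe, the argument actually needs only $m<p$ rather than $r<p$.
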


\begin{proof}
By definition, $\sum_N \mathcal{S}(N;(m,1^k))\frac{x^N}{N!}=\frac{(e^x-1)^m}{m!}(e^x-1)^k=\frac{(e^x-1)^r}{m!}$; comparing with the standard EGF $\sum_N{N\brace r}\frac{x^N}{N!}=\frac{(e^x-1)^r}{r!}$ gives $\mathcal{S}(N;(m,1^k))=\frac{r!}{m!}{N\brace r}$. Using ${N\brace r}=\frac{1}{r!}\sum_{j=0}^r(-1)^{r-j}\binom{r}{j}j^N$, we obtain $\mathcal{S}(N;(m,1^k))=\frac{1}{m!}\sum_{j=0}^r(-1)^{r-j}\binom{r}{j}j^N$. Since $r<p$, $m<p$ so $m!$ is invertible modulo $p^2$; applying Lemma \ref{lem:fermat_p2_lifting} to each base $j\in\{0,\dots,r\}$ gives $j^{p^2+n}\equiv j^{p+n}\pmod{p^2}$, hence
\begin{align*}
\mathcal{S}\big(p^2+n;(m,1^k)\big)&=\frac{1}{m!}\sum_{j=0}^r(-1)^{r-j}\binom{r}{j}j^{p^2+n}\\
&\equiv\frac{1}{m!}\sum_{j=0}^r(-1)^{r-j}\binom{r}{j}j^{p+n}\\
&=\mathcal{S}\big(p+n;(m,1^k)\big)\pmod{p^2}.
\end{align*}
\end{proof}

\begin{remark}
\label{rem:p2_mixed_stirling_scope}
The restriction $m+k < p$ is crucial. It guarantees that $m!$ is a $p$-adic unit modulo $p^2$ and restricts the summation base to $0, 1, \dots, m+k < p$, ensuring that the congruence reduces directly to the elementary Fermat-type lifting of Lemma~\ref{lem:fermat_p2_lifting} without factorial denominator obstructions from higher indices.
\end{remark}

\section{Conclusions and Further Directions}

This paper established a unified framework for mixed Stirling and mixed Bell numbers, bridging combinatorial partition structures, Touchard polynomials, and $p$-adic arithmetic phenomena. By incorporating prescribed multiplicities and mixed block types, the framework systematically recovers classical families including Bell, Stirling of the second kind, Fubini, and distinguished-block numbers as natural boundary specializations.
A central arithmetic contribution is the modulo-$p^2$ lifting theorem for mixed Stirling numbers: for a prime $p$ and $m+k<p$,
\begin{equation}
\mathcal{S}\big(p^2+n;(m,1^k)\big)\equiv \mathcal{S}\big(p+n;(m,1^k)\big)\pmod{p^2}.
\end{equation}
This result opens several avenues for future research.
\begin{enumerate}
\item \textbf{Higher-order prime-power lifting.} Determine the precise conditions under which the lifting extends to arbitrary prime powers, $\mathcal{S}(p^s+n;(m,1^k))\equiv \mathcal{S}(p^{s-1}+n;(m,1^k))\pmod{p^s}$, $s\ge2$.
\item \textbf{$p$-adic correction terms for Bell polynomials.} Formulate explicit higher-order congruence expansions $\GB^{(y)}_{p^s+n}\equiv \GB^{(y)}_{p^{s-1}+n}+C_{p,s,n}(y)\pmod{p^s}$, capturing the non-trivial $p$-adic deviations arising from nested exponential structures.
\item \textbf{Valuation analysis.} Investigate the $p$-adic valuations of mixed Stirling numbers and relax the multiplicity bound to reach or exceed $p$.
\end{enumerate}
The mixed framework unifies several prominent integer sequences and triangular arrays, summarized in Table \ref{FT}. 
\begin{table}[h]
\centering
\caption{Classical specializations and OEIS connections of the mixed families}
\label{FT}
\small
\begin{tabular}{lll}
\toprule
Sequence/Family & Specialization/Representation & OEIS\\
\midrule
Bell numbers & $\GB^{(1)}_n=\sum_{k=0}^n{n\brace k}$ & \href{https://oeis.org/A000110}{OEIS A000110}\\
Stirling numbers (2nd kind) & $\mathcal{S}(n;(1^k))={n\brace k}$ & \href{https://oeis.org/A008277}{OEIS A008277}\\
Fubini (ordered Bell) numbers & $\MF_{n,0}=\sum_{k=0}^nk!{n\brace k}$ & \href{https://oeis.org/A000670}{OEIS A000670}\\
First-difference Bell numbers & $\MB_{n,1}+1=B_{n+1}-B_n$ & \href{https://oeis.org/A005493}{OEIS A005493}\\
Distinguished-block partitions & $D(n,k)=MB_{n,k}/k!+{n\brace k}$, triangular array & \href{https://oeis.org/A049020}{OEIS A049020}\\
Total mixed Bell numbers & $\TB_n=\sum_{r=2}^n\binom{n}{r}r!\sum_{j=1}^{r-1}\frac1{j!}$ &\href{https://oeis.org/A394515}{OEIS A394515}\\
Mixed Fubini numbers & $\MF_{n,m}=\sum_{j=m}^n\binom{n}{j}{j\brace m}\F_{n-j}$ & --\\
\bottomrule
\end{tabular}
\end{table}

\end{document}